\documentclass[11pt]{amsart}
\usepackage{amsmath}
\usepackage{amssymb}
\usepackage{amsfonts}
\usepackage{mathrsfs}

\numberwithin{equation}{section}       
\numberwithin{figure}{section}       

\theoremstyle{plain}

\newtheorem{prop}{Proposition}[section]

\newtheorem{coro}[prop]{Corollary}
\newtheorem{lemm}[prop]{Lemma}
\newtheorem{fact}[prop]{Fact}
\newtheorem{theoalph}{Theorem}

\theoremstyle{definition}
\newtheorem{defi}[prop]{Definition}

\theoremstyle{remark}
\newtheorem{rema}[prop]{Remark}

\newtheorem{prob}[prop]{Problem}

\newtheoremstyle{citing}
  {3pt}
  {3pt}
  {\itshape}
  {}
  {\bfseries}
  {.}
  {.5em}
  {\thmnote{#3}}

\theoremstyle{citing}
\newtheorem*{generic}{}

\newcommand{\partn}[1]{{\smallskip \noindent \textbf{#1.}}}

%
%

\newcommand{\C}{\mathbb{C}}

\newcommand{\R}{\mathbb{R}}

\newcommand{\Z}{\mathbb{Z}}

\newcommand{\cM}{\mathcal{M}}

\newcommand{\cZ}{\mathcal{Z}}

\newcommand{\sA}{\mathscr{A}}
\newcommand{\sB}{\mathscr{B}}

%
%

\newcommand{\hC}{\widehat{C}}

\newcommand{\hK}{\widehat{K}}

\newcommand{\hS}{\widehat{S}}

\newcommand{\hvarphi}{\widehat{\varphi}}

\newcommand{\tB}{\widetilde{B}}

\newcommand{\tM}{\widetilde{M}}

\newcommand{\tU}{\widetilde{U}}

\newcommand{\tW}{\widetilde{W}}

\newcommand{\teta}{\widetilde{\teta}}

\newcommand{\tnu}{\widetilde{\nu}}

\newcommand{\tphi}{\widetilde{\phi}}
\newcommand{\tvarphi}{\widetilde{\varphi}}

\newcommand{\tpsi}{\widetilde{\psi}}

%
%


\renewcommand{\=}{ : = }

\DeclareMathOperator{\diam}{diam}

\DeclareMathOperator{\Lip}{Lip} 
\DeclareMathOperator{\dist}{dist}

\DeclareMathOperator{\Crit}{Crit} 
\DeclareMathOperator{\CV}{CV} 

\newcommand{\CC}{\overline{\C}}

\newcommand{\htop}{h_{\operatorname{top}}}

\DeclareMathOperator{\dom}{dom}

\newcommand{\eps}{\varepsilon}
\newcommand{\ul}{\underline{l}}

\newcommand{\whf}{\widehat{f}}

\begin{document}

\title[Equilibrium states of one-dimensional maps]{Equilibrium states of weakly hyperbolic one-dimensional maps for H{\"o}lder potentials}

\author{Huaibin Li}
\address{Huaibin Li, Facultad de Matem{\'a}ticas, Pontificia Universidad Cat{\'o}lica de Chile, Avenida Vicu{\~n}a Mackenna~4860, Santiago, Chile}
\email{matlihb@gmail.com}
\thanks{HL was partially supported by the National Natural Science Foundation of China (Grant No. 11101124) and FONDECYT grant 3110060 of Chile.}

\author{Juan Rivera-Letelier}
\address{Juan Rivera-Letelier, Facultad de Matem{\'a}ticas, Pontificia Universidad Cat{\'o}lica de Chile, Avenida Vicu{\~n}a Mackenna~4860, Santiago, Chile}
\email{riveraletelier@mat.puc.cl}
\thanks{JRL was partially supported by FONDECYT grant 1100922 of Chile.}

\subjclass[2010]{Primary 37D35; Secondary 37E05, 37F10}
\keywords{One-dimensional dynamical systems, non-uniform hyperbolicity, thermodynamic formalism}

\begin{abstract}
There is a wealth of results in the literature on the thermodynamic formalism for potentials that are, in some sense, ``hyperbolic''.
We show that for a sufficiently regular one-dimensional map satisfying a weak hyperbolicity assumption, every H{\"o}lder continuous potential is hyperbolic.
A sample consequence is the absence of phase transitions: The pressure function is real analytic on the space of H{\"o}lder continuous functions.
Another consequence is that every H{\"o}lder continuous potential has a unique equilibrium state, and that this measure has exponential decay of correlations.
\end{abstract}

\maketitle

\section{Introduction}
The thermodynamic formalism of smooth dynamical systems was initiated  by Sinai, Ruelle, and Bowen~\cite{Bow75,Rue76,Sin72}.
For a uniformly hyperbolic diffeomorphism acting on a compact manifold of arbitrary dimension, they gave a complete description for H{\"o}lder continuous potentials.
In particular, they showed that on each basic set there is a unique equilibrium state, and that this measure has exponential decay of correlations.
Furthermore, there are no phase transitions: The topological pressure of a basic set is real analytic as a function of the potential.

There have been several extensions of these results to one-dimensional maps, that go beyond the uniformly hyperbolic setting.
The lack of uniform hyperbolicity is usually compensated by an extra hypothesis on the potential.
For example, there is a wealth of results for a piecewise monotone interval map~$f : I \to I$ and a potential~$\varphi$ of bounded variation satisfying
$$ \sup_I \varphi < P(f, \varphi), $$
where~$P(f, \varphi)$ denotes the pressure.
Most results apply under the following weaker condition:
\begin{center}
For some integer~$n \ge 1$, the function~$S_n(\varphi) \= \sum_{j = 0}^{n - 1} \varphi \circ f^j$ satisfies
$\sup_{I} \frac{1}{n} S_n(\varphi) < P(f, \varphi).$
\end{center}
In what follows, a potential~$\varphi$ satisfying this condition is said to be \emph{hyperbolic for~$f$}.
See for example~\cite{BalKel90,DenKelUrb90,HofKel82b,Kel85,LivSauVai98,Rue94a} and references therein, as well as Baladi's book~\cite[\S$3$]{Bal00b}.
The classical result of Lasota and Yorke~\cite{LasYor73} can be recovered as the special case in which~$f$ is piecewise~$C^2$ and uniformly expanding, and~$\varphi = - \log |Df|$.

For a complex rational map in one variable~$f$, and a H{\"o}lder continuous potential~$\varphi$ that is hyperbolic for~$f$, a complete description of the thermodynamic formalism was given by Denker, Haydn, Przytycki, and Urba{\'n}ski in~\cite{Hay99,DenPrzUrb96,DenUrb91e,Prz90},\footnote{In this setting most of the results have been stated for a potential~$\varphi$ satisfying the condition $\sup \varphi < P(f, \varphi)$ that is more restrictive than~$\varphi$ being hyperbolic for~$f$. However, general arguments show they also apply to hyperbolic potentials, see~\cite[\S$3$]{InoRiv12}.} extending previous results of Freire, Lopes, and Ma{\~n}{\'e}~\cite{FreLopMan83,Man83}, and Ljubich~\cite{Lju83}.
See also the alternative approach of Szostakiewicz, Urba{\'n}ski, and Zdunik in~\cite{SzoUrbZdu11one}, and~\cite{FisUrb00,Hay00,InoRiv12} for further results.

Using the results of Keller in~\cite{Kel85}, we recently extended these results to the case of a sufficiently regular interval map and a H{\"o}lder continuous potential in the companion paper~\cite{LiRiv1210a}.
See~\cite{BruTod08} for an earlier result in a similar setting.

Suitably interpreted, the hyperbolicity condition also appears in the study of geometric potentials in~\cite{PrzRiv11,PrzRivinterval}.
In fact, consider a continuous map~$f$ acting on a compact metric space~$X$, and let~$\cM(X, f)$ be the space of Borel probability measures on~$X$ that are invariant by~$f$.
Then a continuous potential~$\varphi$ is hyperbolic if and only if
\begin{equation}
  \label{e:canonic hyperbolicity}
  \sup_{\mu \in \cM(X, f)} \int_X \varphi d \mu < P(f, \varphi),
\end{equation}
see for example~\cite[Proposition~$3.1$]{InoRiv12}.
In the case~$f$ is a differentiable map in one real or complex variable, put
$$ \chi_{\inf} \= \inf_{\mu \in \cM(X, f)} \int_X \log |Df| \ d \mu
\text{ and }
\chi_{\sup} \= \sup_{\mu \in \cM(X, f)} \int_X \log |Df| \ d \mu. $$
Then for each real number~$t$, the condition~\eqref{e:canonic hyperbolicity} applied to the potential~$\varphi = - t \log |Df|$ becomes
$$ P(f, - t \log |Df|)
>
\max \left\{ - t \chi_{\inf}, - t \chi_{\sup} \right\}, $$
which is precisely the condition appearing in~\cite{PrzRiv11,PrzRivinterval}.

In this paper we show, in both the real and the complex setting, that for a sufficiently regular one-dimensional map satisfying a weak form of hyperbolicity, \textit{\textsf{every}} H{\"o}lder continuous potential is hyperbolic.
So, for such a map~$f$, all the results above apply to every H{\"o}lder continuous potential.
A consequence is that for every H{\"o}lder continuous function~$\varphi$ there is a unique equilibrium state of~$f$ for the potential~$\varphi$.
Moreover, this measure has several statistical properties, like exponential decay of correlations and the Almost Sure Invariance Principle, see Remark~\ref{r:statistical properties}.
Another consequence is the absence of phase transitions: The pressure function is real analytic on the space of H{\"o}lder continuous functions.
In~\cite{Li1307} this result is used to show a level-$2$ Large Deviation Principle for H{\"o}lder continuous potentials.

We proceed to describe our main results more precisely.

\subsection{Statements of results}
\label{ss:statements}
We state~$2$ results, one in the real setting and the other in the complex setting.
To simplify the exposition, they are formulated in a more restricted situation than what we are able to handle, see the Main Theorem in~\S\ref{s:a reduction} for a more general formulation of our results.
We start recalling some concepts from thermodynamic formalism, see for example~\cite{PrzUrb10,Wal78} for background.

Let~$(X, \dist)$ be a compact metric space and $T:X\to X$ a continuous map.
Denote by~$\cM(X)$ the space of Borel probability measures on~$X$ endowed with the weak* topology, and by~$\cM(X, T)$ the subspace of~$\cM(X)$ of those measures that are invariant by~$T$.
For each measure~$\nu$ in~$\cM(X, T)$, denote by~$h_{\nu}(T)$ the \emph{measure-theoretic entropy} of~$\nu$.
For a continuous function $\varphi: X \to \R$, denote by~$P(T,\varphi)$ the \emph{topological pressure of $T$ for the potential $\varphi$}, defined by
\begin{equation}
\label{e:variational principle}
P(T, \varphi)
\=
\sup\left\{h_\nu(T) + \int_X \varphi \ d\nu: \nu\in \cM(X, T)\right\}.
\end{equation}
A measure~$\nu$ in~$\cM(X, T)$ is called an \emph{equilibrium state of $T$ for the potential~$\varphi$}, if the supremum in~\eqref{e:variational principle} is attained at~$\nu$.

On the other hand, a measure~$\nu$ in~$\cM(X, T)$ is \emph{exponentially mixing for~$f$} or \emph{has exponential decay of correlations for~$f$}, if there are constants~$C > 0$ and~$\rho$ in~$(0, 1)$, such that for every bounded measurable function~$\phi : X \to \R$ and every Lipschitz continuous function $\psi : X \to \R$, we have for every integer~$n \ge 1$
$$ \left|\int_{X}\phi \circ f^n \cdot \psi \ d\nu - \int_{X}\phi \ d\nu \int_{X} \psi \ d\nu \right|
\le
C \left( \sup_X |\phi| \right) \| \psi \|_{\Lip} \cdot \rho^n, $$
where $\| \psi \|_{\Lip} \= \sup_{x, x' \in X, x \neq x'} \frac{|\psi(x) - \psi(x')|}{\dist(x,x')}$.

Given a compact interval~$I$, a smooth map~$f : I \to I$ is \emph{non-degenerate}, if the number of points at which the derivative of~$f$ vanishes is finite, and if for every such point there is a higher order derivative of~$f$ that is non-zero. 
\begin{theoalph}
\label{t:real}
Let~$I$ be a compact interval and let $f: I \to I$ be a topologically exact non-degenerate smooth map.
Assume~$f$ has only hyperbolic repelling periodic points, and for each critical value~$v$ of~$f$ we have
$$ \lim_{n\to + \infty} |Df^n(v)| = + \infty. $$
Then every H\"{o}lder continuous potential $\varphi: I \rightarrow \R$ is hyperbolic for~$f$.
In particular, there is a unique equilibrium state~$\nu$ of~$f$ for the potential~$\varphi$.
Moreover, the measure-theoretic entropy of~$\nu$ is positive, and~$\nu$ is exponentially mixing for~$f$.
Finally, for every H{\"o}lder function~$\psi : I \to \R$, the function~$t \mapsto P(f, \varphi + t \psi)$ is real analytic.
\end{theoalph}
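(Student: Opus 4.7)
The plan is to reduce the theorem to a single claim: \emph{every H\"older continuous potential $\varphi : I \to \R$ is hyperbolic for $f$}, meaning it satisfies condition~\eqref{e:canonic hyperbolicity}. Once this is established, the existence and uniqueness of the equilibrium state $\nu$, its positive entropy, and its exponential decay of correlations all follow from the companion paper~\cite{LiRiv1210a}, which treats precisely the case of hyperbolic H\"older potentials for non-degenerate interval maps. Real analyticity of $t \mapsto P(f, \varphi + t\psi)$ follows in turn from a spectral gap for a suitable transfer operator acting on a Banach space of densities, via standard analytic perturbation theory (Kato--Rellich).

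Thus the core task is to verify
$$ \sup_{\mu \in \cM(I, f)} \int_I \varphi \, d\mu < P(f, \varphi). $$
Since $\cM(I, f)$ is weak*-compact and $\mu \mapsto \int \varphi\, d\mu$ is continuous, a maximizer $\mu^*$ exists. If $h_{\mu^*}(f) > 0$, the inequality is immediate from the variational principle~\eqref{e:variational principle}. The difficult case is when $h_{\mu^*}(f) = 0$, which includes the possibility that $\mu^*$ is atomic on a hyperbolic repelling periodic orbit. Decomposing into ergodic components we may assume $\mu^*$ is ergodic; by the assumed absence of indifferent periodic orbits together with standard results on Lyapunov exponents for non-flat interval maps, we have $\chi_{\mu^*} := \int \log|Df|\, d\mu^* > 0$.

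In this hard case the strategy is to construct a nearby invariant measure $\mu_\delta$ supported on a hyperbolic horseshoe $\Lambda_\delta \subset I$, with the property that $h_{\mu_\delta}(f) \ge \alpha$ for some $\alpha > 0$ independent of $\delta$, and
$$ \left|\int \varphi \, d\mu_\delta - \int \varphi \, d\mu^*\right| \le C\delta^\gamma, $$
where $\gamma$ is the H\"older exponent of $\varphi$ and $C$ depends only on its H\"older norm. Choosing $\delta$ small enough that $C\delta^\gamma < \alpha$ yields $P(f, \varphi) \ge h_{\mu_\delta}(f) + \int \varphi \, d\mu_\delta > \int \varphi \, d\mu^*$, as required. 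The horseshoe is built as follows: fix a $\mu^*$-generic point $x_0$, choose $M$ large enough that the empirical measure on the orbit segment $x_0, \ldots, f^{M-1}(x_0)$ weak*-approximates $\mu^*$; use topological exactness to find $N$ such that $f^N$ maps every $\delta$-ball onto $I$, producing several branches of $f^N$ landing in a $\delta$-neighborhood of $x_0$; concatenate this construction along the orbit segment to produce a Markov structure for an appropriate iterate, and invoke the critical-escape hypothesis to ensure bounded distortion along the iterates.

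The main obstacle is reconciling the quantitative estimates in this horseshoe construction: one must simultaneously achieve accurate shadowing of the orbit of $x_0$ (so that H\"older continuity of $\varphi$ turns orbit-closeness into integral-closeness), bounded distortion across full iterates of $f^N$ that may pass close to critical points, and a uniform lower bound on the number of Markov branches. Each of these rests on a different hypothesis of the theorem: topological exactness provides the spreading that enables shadowing and branching, the condition $|Df^n(v)| \to +\infty$ at critical values controls distortion by forcing critical orbits to leave small neighborhoods of the critical set, and the absence of indifferent periodic points yields uniform hyperbolicity on the resulting horseshoe. Combining these three ingredients into a single construction with matching constants is the technical heart of the argument, and is where I expect the genuinely new work to lie.
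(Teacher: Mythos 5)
Your overall reduction is the same as the paper's: prove that every H\"older potential is hyperbolic for~$f$, and then quote the companion paper~\cite{LiRiv1210a} for uniqueness, positive entropy, exponential mixing, and analyticity of the pressure. The paper does exactly this (Corollary~\ref{c:main consequences}), and it also derives the needed regularity of~$f$ from the hypotheses the way one must: the conditions ``only hyperbolic repelling periodic points'' and $|Df^n(v)|\to+\infty$ are used through~\cite{BruRivShevSt08} (backward contraction) and Ma\~n\'e's theorem, which via~\cite{RivShe1004} give the Polynomial Shrinking Condition; that condition, not the critical-escape hypothesis directly, is what yields the bounded distortion of Birkhoff sums of H\"older potentials along pull-backs (Lemma~\ref{l:bounded distortion}).

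The genuine gap is in your treatment of the hard case. You assert that if the maximizing measure $\mu^*$ is ergodic with $h_{\mu^*}(f)=0$, then ``by the assumed absence of indifferent periodic orbits together with standard results on Lyapunov exponents for non-flat interval maps'' one has $\chi_{\mu^*}>0$. This is false: the hypotheses only rule out measures carried by non-repelling \emph{periodic orbits}, while a map satisfying all the assumptions of Theorem~\ref{t:real} may well carry ergodic invariant measures of \emph{zero} Lyapunov exponent that are not supported on periodic orbits (typically measures on the closure of a critical orbit); the standard results give $\chi\ge 0$ off attracting cycles, not strict positivity. This is precisely the difficulty the paper identifies in~\S\ref{ss:organization}: for a positive-exponent measure the horseshoe/IFS construction you sketch is essentially the argument of~\cite{InoRiv12}, but for a zero-exponent measure ``the construction of a suitable IFS does not seem possible,'' because one has no expansion along the orbit of $x_0$ with which to produce injective branches, uniform hyperbolicity on $\Lambda_\delta$, or a distortion bound, no matter how the constants are tuned. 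The paper's replacement is an induced system of \emph{multivalued} inverse branches (the free IMFS of Proposition~\ref{p:constructing a free IMFS}), built from the natural extension (Lemma~\ref{l:maximizing branches}) so that the Birkhoff sums along the chosen backward branches are nearly maximal; freeness plus Lemma~\ref{l:bounded distortion} then give a lower bound for the \emph{tree pressure} at a single point (the Key Lemma), which is compared with $P(f,\varphi)$ through Lemma~\ref{l:tree pressure} — no auxiliary invariant measure of uniformly positive entropy is ever produced. The periodic-orbit case, which you fold into the same horseshoe scheme, also needs a separate argument in the real setting (Lemma~\ref{l:periodic case}) because interval maps are not open. So your proposal works, with extra care about distortion, only in the positive-exponent case already covered by~\cite{InoRiv12}; it misses the zero-exponent case, which is the actual content of the theorem.
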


Under the additional assumption that~$\varphi$ is hyperbolic for~$f$, the assertions of Theorem~\ref{t:real} can be obtained by combining~\cite[Corollaries~$1.1$ and~$1.2$]{BuzPacSch01} and~\cite[Theorem~A]{RivShe1004}.
Under the more restrictive \emph{bounded range condition}: $\sup_I\varphi - \inf_I \varphi < \htop(f)$, these are given by~\cite[Theorem~$3.4$]{Kel85}, see also~\cite{BruTod08, BruTod11}.
Theorem~\ref{t:real} shows that the assumption that~$\varphi$ is hyperbolic for~$f$ is not needed.

Buzzi showed a result related to Theorem~\ref{t:real} for potentials that are ``symbolically H{\"o}lder continuous'', see~\cite[Theorem~$1$]{Buz04}.
For a smooth non-degenerate map~$f$, the class of symbolically H{\"o}lder continuous potentials is not related to the class of H{\"o}lder continuous potentials, unless~$f$ satisfies the Topological Collet-Eckmann condition, see~\cite[Corollary~C]{Riv1204}.
If~$f$ satisfies the Topological Collet-Eckmann condition, then these classes of potentials coincide and our argument gives a different proof of Buzzi's result.\footnote{Although there are maps satisfying the Topological Collet-Eckmann condition that do not satisfy the hypotheses of Theorem~\ref{t:real}, our Main Theorem in~\S\ref{s:a reduction} does apply to maps satisfying the Topological Collet-Eckmann condition, see Remark~\ref{r:TCE}.}
When~$f$ has only one critical value~$v$, the Topological Collet-Eckmann condition is equivalent to the condition that~$|D f^n(v)|$ grows exponentially in~$n$, so our result is significantly stronger in this case.

Once it is shown that the potential~$\varphi$ is hyperbolic for~$f$, the rest of the assertions of Theorem~\ref{t:real} follow from~\cite[Corollary~$1.4$]{LiRiv1210a}.

In contrast with Theorem~\ref{t:real}, our Main Theorem in~\S\ref{s:a reduction} applies to maps having a neutral periodic point.
The prototypical examples of interval maps having a neutral periodic point are the intermittent maps studied in~\cite{LivSauVai99,Lop93,MelTer12,PomMan80,PreSla92,PolShaYur98,PolWei99,Sar01a,You99}, among others.
For concreteness, fix~$\alpha$ in~$(0, 1)$ and consider the map
\begin{center}
  \begin{tabular}{rcl}
    $f_{\alpha} : [0, 1]$ & $\to$ & $[0, 1]$ \\
$x$ & $\mapsto$ &
$\begin{cases}
x(1 + x^{\alpha}) & \text{if~$x(1 + x^\alpha) \le 1$;} \\
x(1 + x^\alpha) - 1 & \text{otherwise.}
\end{cases}$
  \end{tabular}
\end{center}
Our argument shows that for each~$\alpha'$ in~$(\alpha, 1]$, every H{\"o}lder continuous potential of exponent~$\alpha'$ is hyperbolic for~$f_\alpha$.
In particular, $f_\alpha$ has no phase transitions on the space of H{\"o}lder continuous potentials of exponent~$\alpha'$.
The hypothesis that~$\alpha'$ is in~$(\alpha, 1]$ is necessary: The potential~$- \log |Df_\alpha|$ is H{\"o}lder continuous of exponent~$\alpha$, and it is not hyperbolic for~$f_\alpha$, see Remark~\ref{r:intermittent}.

To state our main result in the complex setting, for each complex rational map~$f$ denote by~$\Crit(f)$ the set of critical points of~$f$, and by~$J(f)$ the Julia set of~$f$.
In what follows we restrict the action of~$f$ to~$J(f)$.
In particular, the pressure function is defined through measures supported on~$J(f)$ and each equilibrium state is supported on~$J(f)$.
\begin{theoalph}
\label{t:complex}
Let~$f$ be one of the following:
\begin{enumerate}
\item[1.]
An at most finitely renormalizable complex polynomial of degree at least~$2$, without neutral cycles and such that for each critical value~$v$ of~$f$ we have
$$ \lim_{n\rightarrow + \infty}|Df^n(v)| = + \infty; $$
\item[2.]
A complex rational map of degree at least~$2$, without parabolic cycles and such that for every critical value~$v$ of~$f$ we have
$$ \sum_{n = 1}^{+ \infty} \frac{1}{|Df^n(v)|} < + \infty. $$
\end{enumerate}
Then every H\"{o}lder continuous potential $\varphi: J(f)\rightarrow \R$ is hyperbolic for~$f$.
In particular, there is a unique equilibrium state~$\nu$ of~$f$ for the potential~$\varphi$.
Moreover, the measure-theoretic entropy of~$\nu$ is positive and~$\nu$ is exponentially mixing for~$f$.
Finally, for every H{\"o}lder function~$\psi : I \to \R$, the function~$t \mapsto P(f, \varphi + t \psi)$ is real analytic.
\end{theoalph}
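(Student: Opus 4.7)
The plan is to deduce Theorem~\ref{t:complex} from the Main Theorem announced in Section~\ref{s:a reduction}, which unifies the real and complex settings into a single framework for proving that every H\"older potential is hyperbolic. Once that hyperbolicity is in hand, the remaining conclusions of Theorem~\ref{t:complex} follow from the cited literature. So the proof reduces to two tasks: verifying the hypotheses of the Main Theorem for each of the two classes of rational maps in the statement, and assembling the consequences.

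First, I would verify the Main Theorem's hypotheses for the two families. In case~2, the summability condition $\sum_n |Df^n(v)|^{-1} < +\infty$ on each critical value is the classical Przytycki summability condition; it is known to imply strong forms of non-uniform hyperbolicity on $J(f)$, such as positive Lyapunov exponents for every invariant measure of positive entropy, the existence of a conformal measure of minimal exponent, and (in the absence of parabolic cycles) a form of backward contraction. These should be exactly the quantitative inputs required by the abstract framework of the Main Theorem. In case~1, the hypotheses are more delicate: finite renormalizability together with the absence of neutral cycles restricts the postcritical dynamics, and the divergence $|Df^n(v)| \to +\infty$ along every critical orbit provides the needed weak expansion. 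The plan is to combine the renormalization structure with shrinking-neighborhood arguments to obtain uniform distortion control on pullbacks of small disks avoiding the postcritical set, which again feeds into the Main Theorem.

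Second, once hyperbolicity of an arbitrary H\"older continuous $\varphi$ is established, the remaining statements (existence and uniqueness of the equilibrium state, positivity of its entropy, exponential decay of correlations, and real analyticity of $t \mapsto P(f, \varphi + t\psi)$) follow from results of Denker, Haydn, Przytycki, and Urba\'nski in \cite{Hay99,DenPrzUrb96,DenUrb91e,Prz90}, combined with the argument in \cite[\S$3$]{InoRiv12} allowing one to pass from the restrictive condition $\sup_{J(f)} \varphi < P(f,\varphi)$ to the full hyperbolicity condition~\eqref{e:canonic hyperbolicity}. The real analyticity of the pressure comes from the standard spectral-gap analysis of transfer operators in the hyperbolic regime.

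The main obstacle, and where I expect the substantive work to lie, is in case~1 of the Main Theorem's hypotheses: the growth assumption $|Df^n(v)| \to +\infty$ is purely qualitative, with no prescribed rate, so one must extract quantitative weak hyperbolicity from it. Finite renormalizability is the essential companion hypothesis here, since it rules out infinitely deep nested renormalization windows that could destroy uniform bounds. The expected strategy is to use the Rivera-Letelier backward contraction property as an intermediate step, since it is known to follow from the hypotheses in case~1 and is well-suited to controlling the preimage geometry that ultimately yields hyperbolicity of H\"older potentials. Case~2 should be technically easier, thanks to the quantitative strength of Przytycki summability.
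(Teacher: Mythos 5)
Your overall route is the paper's: reduce to the Main Theorem, then quote the literature for uniqueness, exponential mixing, positive entropy, and analyticity of the pressure (the paper packages this last step as Corollary~\ref{c:main consequences}, using the cohomology trick of replacing $\varphi$ by $\frac{1}{n} S_n(\varphi)$ so that the $\sup$ condition holds before invoking~\cite{Hay99,SzoUrbZdu11one}); that part of your plan is fine.

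The gap is in how you propose to verify the hypothesis of the Main Theorem. That hypothesis is the Polynomial Shrinking Condition with some exponent $\beta>1$, and since the Main Theorem only covers H{\"o}lder exponents $\alpha>\beta^{-1}$, you need the condition for arbitrarily large $\beta$ in order to treat \emph{every} H{\"o}lder potential. Backward contraction by itself does not deliver this, and the ``quantitative inputs'' you list in case~2 (positive Lyapunov exponents for positive-entropy measures, conformal measures of minimal exponent) are not the hypotheses of the Main Theorem and do not feed into it. What the paper actually does is combine two quotable facts: $f$ is backward contracting with every constant $r>1$ (\cite[Theorem~A]{LiShe10b} in case~1, \cite[Theorem~A]{Riv07} in case~2) \emph{and} $f$ is uniformly expanding away from critical points (\cite{KozvSt09} in case~1, \cite[Corollary~$8.3$]{Riv07} in case~2), and then applies \cite[Theorem~A]{RivShe1004}, stated as Fact~\ref{f:Polyshrink}, which converts these two properties into the Polynomial Shrinking Condition with any prescribed exponent. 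Both the ``expanding away from critical points'' ingredient and this bridging result are missing from your plan, so as written the reduction to the Main Theorem is not closed. Relatedly, the ``substantive work'' you anticipate in case~1 is unnecessary: backward contraction for at most finitely renormalizable polynomials without neutral cycles and with $|Df^n(v)|\to+\infty$ is already a theorem of Li--Shen, not something to be re-derived from renormalization and shrinking-neighborhood arguments.
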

Recall that for an integer~$s \ge 2$, a complex polynomial~$f$ is \emph{renormalizable of period~$s$}, if there are Jordan disks~$U$ and~$V$ in~$\C$, such that~$\overline{U} \subset V$ and such that the following hold:
\begin{itemize}
\item $f^s: U\to V$ is proper of degree at least~$2$;
\item The set $\{z\in U: f^{sn}(z)\in U \text{ for all } n=1,2,\ldots\}$ is connected and strictly contained in~$J(f)$;
\item For each critical point~$c$ of~$f$ there exists at most one~$j$ in~$\{0,1,\ldots, s-1\}$ such that~$c$ is~$f^j(U)$.
\end{itemize}
We say that~$f$ is \emph{infinitely renormalizable} if there are infinitely many integers~$s \ge 2$ such that~$f$ is renormalizable of period~$s$.

In the case where~$f$ is a complex rational map satisfying the Topological Collet-Eckmann condition, the essential part of Theorem~\ref{t:complex} is~\cite[Corollary~$1.2$]{InoRiv12}.
We note that the results of~\cite{InoRiv12} only apply to maps having no invariant measure of zero Lyapunov exponent, so Theorem~\ref{t:complex} is substantially stronger.

Once it is shown that the potential~$\varphi$ is hyperbolic for~$f$, the rest of the assertions of Theorem~\ref{t:complex} follow from either~\cite{DenPrzUrb96,Hay99} or~\cite{SzoUrbZdu11one}, combined with general arguments in~\cite[\S$3$]{InoRiv12}.

\begin{rema}
\label{r:statistical properties}
The equilibrium states in Theorem~\ref{t:real} are found in~\cite{LiRiv1210a} using a result of Keller in~\cite{Kel85}, showing that the corresponding transfer operator has a spectral gap in a certain Banach space containing the space of Lipschitz functions.
On the other hand, the equilibrium states in Theorem~\ref{t:complex} are found in~\cite{SzoUrbZdu11one} through a Young tower with an exponential tail estimate.
Thus, in both settings the equilibrium states satisfy several statistical properties, such as the Central Limit Theorem and the Almost Sure Invariance Principle, see~\cite[Theorem~$3.3$]{Kel85} for the former and, \emph{e.g.}, \cite{Gou05,MelNic05,MelNic08,Tyr05} for the latter.
\end{rema}

A natural question raised by Theorems~\ref{t:real} and~\ref{t:complex} is what kind of phase transitions can occur for a smooth one dimensional map, on the space of H{\"o}lder continuous potentials.
It is well-known that neutral periodic points are responsible for phase transitions on the space of H{\"o}lder continuous potentials of a sufficiently small exponent, see for example~\cite[Theorem~$3$]{Lop93} or~\cite[Proposition~$1$]{Sar01a}.
A phase transition for a map without neutral periodic points would be more subtle to produce.
``Fibonacci maps'' are a natural candidate to have a phase transition.
Given~$\ell > 1$ and a real parameter~$c$, an interval map of the form~$x \mapsto |x|^{\ell} + c$ is a \emph{Fibonacci map} if, roughly speaking, the closest return times of the orbit of the critical point~$x = 0$ occur at the Fibonacci numbers, see for example~\cite[Theorem~$1.1$]{LyuMil93} for a precise definition in terms of kneading sequences.
General results imply that for each real number~$\ell > 1$, there is a parameter~$c(\ell)$ such that a suitable restriction~$f_\ell$ of~$x \mapsto |x|^\ell + c(\ell)$ is a Fibonacci map.
For such~$f_\ell$, the closure~$\omega(\ell)$ of the critical orbit is a Cantor set, see for example~\cite[Theorem~$1.2$]{LyuMil93}.
The results of~\cite{KelNow95} imply that there is~$\ell_1 > 2$ such that for~$\ell$ in~$(0, \ell_1)$ the map~$f_\ell$ satisfies the hypothesis of Theorem~\ref{t:real}.
So for~$\ell$ in~$(0, \ell_1)$, the map~$f_\ell$ does not have phase transitions in the space of H{\"o}lder continuous potentials.

\begin{prob}
Show that for every sufficiently large~$\ell > 1$, there are constants~$C > 0$ and~$\alpha$ in~$(0, 1)$ such that the potential~$\varphi_0(x) \= - C \dist(x, \omega(\ell))^\alpha$ satisfies~$P(f_\ell, \varphi_0) = 0$.
\end{prob}

A positive solution of this problem would imply that~$\varphi_0$ is not hyperbolic for~$f_\ell$, and that the function~$t \mapsto P(f_\ell, t \varphi_0)$ is not real analytic.
\subsection{Organization and strategy of the proof}
\label{ss:organization}
To prove Theorems~\ref{t:real} and~\ref{t:complex}, we follow the general strategy of the proof of~\cite[Main Theorem]{InoRiv12}.
The main step in the proof of~\cite[Main Theorem]{InoRiv12} is the construction, for a given invariant measure with positive Lyapunov exponent, of an Iterated Function System (IFS) that is used to estimate the pressure from below.
In this paper, we have to deal with the more difficult situation of an invariant measure of zero Lyapunov exponent.
For such a measure, the construction of a suitable IFS does not seem possible.
Instead, we consider a more general type of induced system, formed by multivalued functions.
Once this induced system is constructed, we follow the strategy in~\cite{InoRiv12} to deduce our main results, although in the real case there are some extra difficulties coming from the fact that interval maps are not open maps in general.

The paper is organized as follows.
In~\S\ref{s:a reduction} we state a version of Theorems~\ref{t:real} and~\ref{t:complex} that holds for a more general class of maps; it is stated as ``Main Theorem''.
After deriving Theorems~\ref{t:real} and~\ref{t:complex} from the Main Theorem and known results in~\S\ref{ss:proof of real and complex}, we state our main technical result as the ``Key Lemma'' in~\S\ref{ss:a reduction}, where we also derive the Main Theorem from it.

In~\S\ref{s:constructing a free IMFS} we construct an induced system formed by multivalued functions, and in~\S\ref{s:proof of Key Lemma} we prove the Key Lemma using this induced system to estimate the pressure from below.

\subsection{Acknowledgments}
\label{ss:Acknowledgements}
The authors would like to thank Henk Bruin and Godofredo Iommi for useful comments on the first version.

\section{A reduction}
\label{s:a reduction}
We start this section stating a version of Theorems~\ref{t:real} and~\ref{t:complex} that holds for a more general class of maps; it is stated as the ``Main Theorem''.
In~\S\ref{ss:proof of real and complex} we derive Theorems~\ref{t:real} and~\ref{t:complex} as a direct consequence of the Main Theorem and known results.
In~\S\ref{ss:a reduction} we state our main technical result as the ``Key Lemma'', and we derive the Main Theorem from it.

To state the Main Theorem, we introduce a class of interval maps that includes non-degenerate smooth maps as special cases.
Let~$I$ be a compact interval.
For a differentiable map~$f : I \to I$, a point of~$I$ is \emph{critical} if the derivative of~$f$ vanishes at it.
Denote by~$\Crit(f)$ the set of critical points of~$f$.
A differentiable interval map~$f : I \to I$ is \emph{of class~$C^3$ with non-flat critical points}, if it has a finite number of critical points and if:
\begin{itemize}
\item
The map~$f$ is of class~$C^3$ outside $\Crit(f)$;
\item
For each critical point~$c$ of~$f$ there exists a number $\ell_c>1$ and diffeomorphisms~$\phi$ and~$\psi$ of~$\R$ of class~$C^3$, such that $\phi(c)=\psi(f(c))=0$, and such that on a neighborhood of~$c$ on~$I$, we have
$$ |\psi\circ f| = |\phi|^{\ell_c}. $$
\end{itemize}
Note that each smooth non-degenerate interval map is of class~$C^3$ with non-flat critical points, and that each map of class~$C^3$ with non-flat critical points is continuously differentiable.

For an interval map of class~$C^3$ with non-flat critical points~$f$, denote by~$\dom(f)$ the interval on which~$f$ is defined, and denote by~$\dist$ the distance on~$\dom(f)$ induced by the norm distance on~$\R$.
On the other hand, for a complex rational map~$f$ we use~$\dom(f)$ to denote the Riemann sphere~$\CC$, which we endow with the spherical metric, that we also denote by~$\dist$.
In both, the real and complex setting, for a subset~$W$ of~$\dom(f)$ we use~$\diam(W)$ to denote the diameter of~$W$ with respect to~$\dist$.

\begin{defi}
Let~$f$ be either a complex rational map, or an interval map of class~$C^3$ with non-flat critical points.
The \emph{Julia set~$J(f)$ of~$f$} is the complement of the largest open subset of~$\dom(f)$ on which the family of iterates of~$f$ is normal.
\end{defi}

If~$f$ is a complex rational map of degree at least~$2$, then~$J(f)$ is a perfect set that is equal to the closure of repelling periodic points.
Moreover, $J(f)$ is completely invariant and~$f$ is topologically exact on~$J(f)$.
We denote by~$\sA_{\C}$ the collection of all rational maps of degree at least~$2$.

In contrast with the complex setting, the Julia set of an interval map of class~$C^3$ with non-flat critical points might be empty, reduced to a single point, or might not be completely invariant.\footnote{This last property can only happen if there is a turning point in the interior of the basin of a one-sided attracting neutral periodic point, that is eventually mapped to this neutral periodic point.}
However, if the Julia set of such a map~$f$ is not completely invariant, then it is possible to make an arbitrarily small smooth perturbation of~$f$ outside a neighborhood of~$J(f)$, so that the Julia set of the perturbed map is completely invariant and coincides with~$J(f)$. 
We denote by~$\sA_{\R}$ the collection of interval maps of class~$C^3$ with non-flat critical points, whose Julia set contains at least~$2$ points and is completely invariant.
Note that if~$f : I \to I$ is a non-degenerate smooth map that is topologically exact, then~$J(f) = I$ and~$f$ is in~$\sA_{\R}$.
On the other hand, if~$f$ is an interval map in~$\sA_{\R}$ that is topologically exact on~$J(f)$, then~$J(f)$ has no isolated points.
For more background on the theory of Julia sets, see for example~\cite{dMevSt93} for the real setting, and~\cite{CarGam93,Mil06} for the complex setting.

Throughout the rest of this article we put~$\sA \= \sA_{\R}\cup \sA_{\C}$ and for each~$f$ in~$\sA$ we restrict the action of~$f$ to its Julia set.
In particular, the topological pressure of~$f$ is defined through measures supported on~$J(f)$ and equilibrium states are supported on~$J(f)$.

\begin{defi}
For~$\beta> 0$, a map~$f$ in~$\sA$ satisfies the \emph{Polynomial Shrinking Condition with exponent $\beta$}, if there exist constants~$\rho_0 > 0$ and~$C_0 > 0$ such that for every~$x$ in~$J(f)$, every integer $n \ge 1$, and every connected component~$W$ of $f^{-n}(B(x, \rho))$, we have
$$ \diam(W) \le C n^{-\beta}. $$
\end{defi}

\begin{generic}[Main Theorem]\label{mtm:polyshrink}
Let~$\beta > 1$, and let~$f$ be a map in~$\sA$ satisfying the Polynomial Shrinking Condition with exponent~$\beta$.
Suppose furthermore in the real case that~$f$ is topologically exact on its Julia set.
Then for each~$\alpha$ in~$(\beta^{-1}, 1]$, every H\"{o}lder continuous potential $\varphi: J(f)\to \R$ of exponent~$\alpha$ is hyperbolic for~$f$.
\end{generic}

\begin{rema}
\label{r:TCE}
Recall that a map~$f$ in~$\sA$ satisfies the \emph{Topological Collet-Eckmann Condition}, if there is a constant~$\chi > 0$ such that for every~$\nu$ in~$\cM(J(f), f)$ we have~$\int_{J(f)} \log |Df| \ d\nu \ge \chi$, see~\cite{PrzRivSmi03,Riv1204} for other equivalent formulations.
Each map in~$\sA$ that satisfies the Topological Collet-Eckmann condition satisfies the hypothesis of the Main Theorem for each~$\beta > 1$, see~\cite[Main Theorem']{Riv1204} and~\cite[Proposition~$7.1$]{Riv1206} for the real case, and~\cite[Main Theorem]{PrzRivSmi03} for the complex case.
\end{rema}

\begin{rema}
\label{r:intermittent}
Fix~$\alpha$ in~$(0, 1)$, and consider the map~$f_\alpha$ defined in~\S\ref{ss:statements}.
Well-known arguments show that~$f_{\alpha}$ is topologically exact on~$[0, 1]$, and that~$f_{\alpha}$ satisfies the Polynomial Shrinking Condition with exponent~$\beta = 1 / \alpha$.
Being discontinuous, the map~$f_{\alpha}$ is not in~$\sA$, so we cannot apply the Main Theorem directly to~$f_{\alpha}$.
However, using the Markov structure of~$f_\alpha$ the arguments can be easily adapted to obtain that for each~$\alpha'$ in~$(\alpha, 1]$, every H{\"o}lder continuous potential of exponent~$\alpha'$ is hyperbolic for~$f_{\alpha}$.
The hypothesis that~$\alpha'$ is in~$(\alpha, 1]$ is necessary: The potential~$\varphi_{\alpha} \= - \log |Df_{\alpha}|$ is H{\"o}lder continuous of exponent~$\alpha$, and it is not hyperbolic for~$f_{\alpha}$.
In fact, noting that~$\varphi_{\alpha}(0) = 0$ and~$f_{\alpha}(0) = 0$, for every integer~$n \ge 1$ we have~$\frac{1}{n} S_n(\varphi_{\alpha})(0) = 0$.
Together with the equality~$P(f_{\alpha}, \varphi_{\alpha}) = 0$, shown for example in~\cite[Theorem~$3$]{Lop93}, this implies that~$\varphi_{\alpha}$ is not hyperbolic for~$f_{\alpha}$.
\end{rema}

Given~$f$ in~$\sA$, for each function $\varphi : J(f)\to \R$ and each integer~$n \ge 1$, put
$$ S_n(\varphi)
\=
\varphi + \varphi \circ f + \cdots + \varphi \circ f^{n-1}. $$

\begin{coro}
\label{c:main consequences}
Let~$\beta > 1$, and let~$f$ be a map in~$\sA$ satisfying the Polynomial Shrinking Condition with exponent~$\beta$.
Suppose furthermore in the real case that~$f$ is topologically exact on its Julia set.
Then for every~$\alpha$ in~$(\beta^{-1}, 1]$ and every H{\"o}lder continuous potential~$\varphi$ of exponent~$\alpha$, there is a unique equilibrium state~$\nu$ of~$f$ for the potential~$\varphi$.
Moreover, the measure-theoretic entropy of~$\nu$ is positive and~$\nu$ is exponentially mixing for~$f$.
Finally, for every H{\"o}lder function~$\psi : J(f) \to \R$ of exponent~$\alpha$, the function~$t \mapsto P(f, \varphi + t \psi)$ is real analytic.  
\end{coro}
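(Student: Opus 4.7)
The plan is to deduce this corollary directly from the Main Theorem, together with the thermodynamic formalism for Hölder potentials that are hyperbolic, which is already available in the literature in both the real and complex settings. The key observation is that the Main Theorem gives hyperbolicity of \emph{every} Hölder continuous potential of exponent $\alpha \in (\beta^{-1},1]$ at once, so the one-parameter family $\varphi + t\psi$ consists entirely of hyperbolic Hölder potentials of exponent $\alpha$; this is what will drive the real analyticity statement.

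First I would apply the Main Theorem to the given potential $\varphi$ to conclude that $\varphi$ is hyperbolic for $f$. I would then split into the two cases according to whether $f \in \sA_{\R}$ or $f \in \sA_{\C}$. In the real case, I would invoke \cite[Corollary~$1.4$]{LiRiv1210a}, which, for a map in $\sA_{\R}$ that is topologically exact on its Julia set and for a Hölder potential that is hyperbolic for $f$, provides a unique equilibrium state $\nu$, positivity of $h_{\nu}(f)$, and exponential decay of correlations for Lipschitz observables, through a transfer operator with a spectral gap on a Banach space containing the Hölder functions of exponent $\alpha$. In the complex case, I would combine either the Denker--Przytycki--Urbański--Haydn results of \cite{DenPrzUrb96,Hay99} or the Young-tower construction of Szostakiewicz--Urbański--Zdunik \cite{SzoUrbZdu11one} with the general reduction from hyperbolicity to $\sup\varphi<P(f,\varphi)$ carried out in \cite[\S$3$]{InoRiv12}, to obtain the same conclusions: uniqueness of the equilibrium state, positive entropy, and exponential mixing.

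For the real analyticity of $t \mapsto P(f, \varphi + t\psi)$, I would exploit that the Main Theorem applies uniformly to the whole family $\varphi + t\psi$, $t \in \R$: each member is Hölder of exponent $\alpha$, hence hyperbolic for $f$. Consequently the transfer operator $\cL_{\varphi + t\psi}$ has a spectral gap on the appropriate Banach space for every $t$, and the family $t \mapsto \cL_{\varphi+t\psi}$ depends real analytically on $t$ in the operator norm (since $\psi$ acts as a bounded multiplier on Hölder functions). Kato's analytic perturbation theory for the leading simple eigenvalue then yields real analyticity of $t \mapsto e^{P(f,\varphi+t\psi)}$, and hence of the pressure. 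In the real case this is exactly the analyticity statement in \cite[Corollary~$1.4$]{LiRiv1210a}; in the complex case it follows from the corresponding spectral gap provided by \cite{SzoUrbZdu11one} combined with \cite[\S$3$]{InoRiv12}.

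The proof is essentially a packaging of known results, so I do not anticipate a serious obstacle. The only delicate point is verifying that the hypotheses of each cited theorem are met on the nose for every potential in the family $\{\varphi + t\psi\}_{t\in\R}$ simultaneously, so that the spectral gap is locally uniform in $t$ and real analyticity, not merely smoothness or local Lipschitz continuity, of the pressure can be extracted. This however is automatic from the fact that the Main Theorem applies to \emph{every} Hölder potential of exponent $\alpha$ and so, in particular, to each $\varphi+t\psi$.
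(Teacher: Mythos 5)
Your proposal is correct in outline and follows the same strategy as the paper: hyperbolicity of every member of the family $\varphi+t\psi$ comes from the Main Theorem, the real case is handed off to the companion paper \cite{LiRiv1210a} (the paper cites its Theorem~B, you cite its Corollary~$1.4$; same source), the complex case is handed off to \cite{Hay99} or \cite{SzoUrbZdu11one} via the hyperbolic-to-$\sup\varphi<P$ reduction of \cite[\S$3$]{InoRiv12}, and analyticity on all of $\R$ is obtained by recentering at each $t_0$. Two points where the paper's proof differs from, and is more careful than, yours. First, in the complex case the paper does not leave the reduction to a citation: it explicitly replaces $\varphi$ by the cohomologous Birkhoff average $\tvarphi=\frac{1}{n}S_n(\varphi)$ (which is again H{\"o}lder of exponent $\alpha$ because $f$ is Lipschitz on $\CC$), checks $P(f,\tvarphi)=P(f,\varphi)$ and equality of equilibrium states, and gets positive entropy from the one-line estimate $h_\nu(f)\ge P(f,\tvarphi)-\sup_{J(f)}\tvarphi>0$; you should make this step explicit, since \cite{Hay99} and \cite{SzoUrbZdu11one} require $\sup<P$, not hyperbolicity. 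Second, for analyticity in the complex case the paper does not run a Kato argument: it invokes \cite[Theorem~$47$ in \S$6$]{SzoUrbZdu11one} (stated for $\psi=\varphi$ near $t=0$), notes its proof extends to $\psi\neq\varphi$, and recenters at each $t_0$. Your claim that \cite{SzoUrbZdu11one} supplies a spectral gap for the transfer operator of $f$ on a H{\"o}lder-type space is not accurate --- their construction goes through a Young tower with exponential tails (see Remark~\ref{r:statistical properties}); if you insist on the spectral-gap-plus-perturbation route in the complex case you should base it on Haydn's quasi-compactness result \cite{Hay99} for potentials with $\sup<P$ (again after the cohomology step), and then justify that the perturbed leading eigenvalue is $e^{P(f,\varphi+t\psi)}$. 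With those adjustments your argument matches the paper's; as written, the analyticity paragraph rests on a mechanism the cited reference does not provide.
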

\begin{proof} 
In the real case, the assertions are a direct consequence of the Main Theorem and of~\cite[Theorem~B]{LiRiv1210a}; for the real analyticity of~$t \mapsto P(f, \varphi + t \psi)$, for each~$t_0$ in~$\R$ apply this result with~$\varphi$ replaced by~$\varphi + t_0 \psi$.

To prove assertions in the complex setting, let~$n \ge 1$ be an integer such that the function~$\tvarphi \= \frac{1}{n} S_n(\varphi)$ satisfies~$\sup_{J(f)} \tvarphi < P(f, \varphi)$.
Since~$f$ is Lipschitz as a self-map of~$\CC$, the function~$\tvarphi$ is H{\"o}lder continuous of exponent~$\alpha$.
Moreover, $\tvarphi$ is co-homologous to~$\varphi$: If we put
$$ h
\=
- \frac{1}{n} \sum_{j = 0}^{n - 1} (n - 1 - j) \varphi \circ f^j, $$
then $\tvarphi = \varphi + h - h \circ f$.
This implies that for every~$\nu$ in~$\cM(J(f), f)$ we have~$\int_{J(f)} \tvarphi \ d\nu = \int_{J(f)} \varphi \ d\nu$, and therefore that~$P(f, \tvarphi) = P(f, \varphi)$ and that~$\tvarphi$ and~$\varphi$ share the same equilibrium states.
An analogous argument shows that if we put~$\tpsi \= \frac{1}{n} S_n(\psi)$, then for every~$t$ in~$\R$ we have~$P(\tvarphi + t \tpsi) = P(\varphi + t \psi)$.
Thus
$$ \sup_{J(f)} \tvarphi < P(f, \varphi) = P(f, \tvarphi), $$
and for each equilibrium state~$\nu$ of~$f$ for the potential~$\varphi$, we have
$$ h_\nu(f)
=
P(f, \varphi) - \int_{J(f)} \varphi \ d\nu
=
P(f, \tvarphi) - \int_{J(f)} \tvarphi \ d\nu
\ge
P(f, \tvarphi) - \sup_{J(f)} \tvarphi
>
0. $$
On the other hand, the fact that~$\nu$ is the unique equilibrium state of~$f$ for the potential~$\varphi$, and that~$\nu$ is exponentially mixing, is obtained by applying~\cite{Hay99} or~\cite{SzoUrbZdu11one} to the potential~$\tvarphi$.
To prove that the function~$t \mapsto P(\varphi + t \psi)$ is real analytic, observe first that, when~$\psi = \varphi$, \cite[Theorem~$47$ in~\S$6$]{SzoUrbZdu11one} asserts the function~$t \mapsto P(\varphi + t \psi)$ is real analytic on a neighborhood of~$t = 0$.
The proof of this result extends easily to the case~$\psi$ is different from~$\varphi$.
To prove that~$t \mapsto P(\varphi + t \psi)$ is real analytic on all of~$\R$, for each~$t_0$ in~$\R$ apply this result with~$\varphi$ replaced by~$\varphi + t_0 \psi$.
This completes the proof that the proof of the corollary.
\end{proof}
\subsection{Proofs of Theorems~\ref{t:real} and~\ref{t:complex} assuming the Main Theorem}
\label{ss:proof of real and complex}
We recall the ``backward contracting property''.

For each map~$f$ in~$\sA$, put
$$ \CV(f) \= f(\Crit(f))
\text{ and }
\Crit'(f) \= \Crit(f)\cap J(f). $$
For a subset~$V$ of~$\dom(f)$, and an integer~$m \ge 1$, each connected component of~$f^{-m}(V)$ is a \emph{pull-back of by~$f^m$}.
For every~$c$ in~$\Crit(f)$ and $\delta >0$, denote by~$\tB(c,\delta)$ the pull-back of~$B(f(c),\delta)$ by~$f$ that contains~$c$.

\begin{defi}
  Given a constant $r>1$, a map~$f$ in~$\sA$ is \emph{backward contracting with constant $r$}, if there exists $\delta_{0}>0$ such that for every~$c$ in~$\Crit'(f)$, every~$\delta$ in~$(0, \delta_0)$, every integer $n\ge 0$, and every component~$W$ of $f^{-n}(\tB(c,r\delta)),$  we have that
$$ \dist (W, \CV(f)) \le \delta
\text{ implies }
\diam(W) < \delta. $$
If for each~$r > 1$ the map~$f$ is backward contracting with constant~$r$, then~$f$ is \emph{backward contracting}.
\end{defi}

A map~$f$ in~$\sA$ is \emph{expanding away from critical points}, if for every neighborhood~$V$ of~$\Crit'(f)$ the map~$f$ is uniformly expanding on the set
$$ K(V)
\=
\{z\in J(f): f^i(z)\not\in V \text{ for all } i\ge 0\}. $$
In other words, there exist $C>0$ and  $\lambda>1$ such that for every~$z$ in~$K(V)$ and~$n\ge 0$, we have $|Df^n(z)|\ge C\lambda^n$.

\begin{fact}[\cite{RivShe1004}, Theorem~A]
\label{f:Polyshrink}
For each map~$f$ in~$\sA$ and each $\beta>0,$ there exists $r>1$ such that the following property holds.
If~$f$ is backward contracting with constant~$r$ and is expanding away from critical points, then~$f$ satisfies the Polynomial Shrinking
Condition with exponent~$\beta$.
\end{fact}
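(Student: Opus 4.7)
The plan is to bound the diameter of pull-backs of small balls by combining two contraction mechanisms: the uniform expansion granted by the expanding-away-from-critical-points hypothesis on stretches of orbit that avoid $\Crit'(f)$, and the definite geometric shrinkage granted by the backward contracting hypothesis whenever the orbit returns close to $\Crit'(f)$. Fix $\beta > 0$ and let $\ell_{\max} = \max_{c \in \Crit(f)} \ell_c$ be the maximal critical order. I would choose $r > 1$ sufficiently large depending on $\beta$ and $\ell_{\max}$, a small neighborhood $V$ of $\Crit'(f)$ on which the expanding-away hypothesis furnishes constants $C_1 > 0$ and $\lambda > 1$ with $|Df^k(y)| \ge C_1 \lambda^k$ for all $y \in K(V)$ and $k \ge 0$, and $\rho_0 < \delta_0 / r$ where $\delta_0$ is the constant from the backward contracting definition.

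Given a component $W$ of $f^{-n}(B(x, \rho_0))$, denote by $W_k$ the connected component of $f^{-(n-k)}(B(x, \rho_0))$ containing $f^k(W)$ for $0 \le k \le n$, so that $W = W_0$ and $W_n \subseteq B(x, \rho_0)$. I would then extract a finite sequence of close-return times $n > \tau_1 > \tau_2 > \cdots > \tau_s \ge 0$ together with a sequence of scales $\delta_{\tau_1} > \delta_{\tau_2} > \cdots$, chosen inductively so that $W_{\tau_i}$ meets $\tB(c, r \delta_{\tau_i})$ for some $c \in \Crit'(f)$, with $\delta_{\tau_i}$ comparable to $\dist(W_{\tau_i}, \CV(f))$. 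The backward contracting hypothesis then yields $\diam(W_{\tau_i}) < \delta_{\tau_i}$, and comparison of successive scales shows that $\delta_{\tau_{i+1}} / \delta_{\tau_i}$ is bounded by a fixed constant depending on $r$. Between consecutive close returns, $W_k$ may be arranged to avoid a slightly smaller neighborhood of $\Crit'(f)$, so uniform expansion on $K(V)$ contributes a multiplicative factor of order $\lambda^{-(\tau_i - \tau_{i+1})}$ when pulling back by inverse branches.

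The main obstacle is combining these two kinds of estimates quantitatively. A pull-back whose image passes near a critical point of order $\ell$ may expand inverse branches by a factor as large as $\delta^{-(1 - 1/\ell)}$ on sets of diameter $\delta$, which can cancel gains from uniform expansion unless $r$ is chosen large enough that the backward contracting shrinkage at each close return more than compensates. Tracking the product of contraction factors over all $n$ steps and splitting into two regimes is the crux. In the sparse-close-returns regime, the number of expansion steps is comparable to $n$, giving exponential decay much stronger than $n^{-\beta}$. In the dense-close-returns regime, the compounded geometric shrinkage at close returns, with ratio a negative power of $r$ depending on $\ell_{\max}$, must dominate; choosing $r$ large enough as a function of $\beta$ and $\ell_{\max}$ makes the $n^{-\beta}$ bound hold uniformly in both regimes, independently of $x \in J(f)$ and $n \ge 1$. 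Verifying the uniformity across the two regimes, and checking that the inductive choice of scales $\delta_{\tau_i}$ does not lose more than a constant factor when passing through nested critical neighborhoods, is where the bulk of the technical work lies.
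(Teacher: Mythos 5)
First, note that the paper does not prove this statement at all: it is stated as a ``Fact'' and imported verbatim as Theorem~A of~\cite{RivShe1004}, so there is no internal proof to compare yours against. Judged on its own terms, your proposal correctly identifies the two mechanisms that any proof must play against each other (backward contraction at close returns to $\Crit'(f)$, uniform expansion in between), which is indeed the spirit of the argument in~\cite{RivShe1004}; but as written it is a strategy outline rather than a proof, and the places where it is vague are exactly the places where the real work happens.

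Two concrete gaps. First, the expansion hypothesis is used incorrectly: ``expanding away from critical points'' gives $|Df^k(y)|\ge C\lambda^k$ only for $y$ in $K(V)$, i.e.\ for points whose \emph{entire forward orbit} avoids $V$. The points of an orbit segment between two consecutive close returns are not in $K(V)$ (their orbit enters $V$ later), so the multiplicative factor $\lambda^{-(\tau_i-\tau_{i+1})}$ you claim for the pull-back between returns does not follow from the hypothesis as stated; one needs an additional (standard but nontrivial) lemma giving uniform expansion along orbit pieces that avoid $V$ up to their first entry into $V$, together with distortion or cross-ratio control to convert derivative bounds into diameter bounds for a whole pull-back component --- a step that is genuinely delicate for interval maps, where pull-backs can contain critical points of the iterate. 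Second, and more seriously, the crux of the statement --- that the exponent $\beta$ can be made \emph{arbitrarily large} by taking $r$ large, uniformly in $x\in J(f)$ and $n$ --- is asserted but not argued. You never establish a quantitative link between the elapsed time $n$ and the number and depths of the close returns: what makes the bound $\diam(W)\le Cn^{-\beta}$ come out is precisely that backward contraction with constant $r$ forces the scales $\delta_{\tau_i}$ to drop by a definite factor at each return while the expansion forces a definite amount of time to be spent recovering from a deep return, and quantifying this trade-off (so that $\beta\to+\infty$ as $r\to+\infty$) is the content of the cited theorem. Saying that ``choosing $r$ large enough makes the $n^{-\beta}$ bound hold in both regimes'' and that this is ``where the bulk of the technical work lies'' amounts to deferring the theorem itself; as a proof attempt it is therefore incomplete, and you should either carry out this time--scale bookkeeping or, as the authors do, cite~\cite{RivShe1004}.
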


\begin{proof}[Proof of Theorem~\ref{t:real}]
By~\cite[Theorem~$1$]{BruRivShevSt08}, the map~$f$ is backward contracting and by Ma{\~n}{\'e}'s theorem~$f$ is expanding away from
critical points, see for example~\cite{dMevSt93}.
Then Fact~\ref{f:Polyshrink} implies that for each~$\beta > 1$ the map~$f$ satisfies the Polynomial Shrinking Condition with exponent~$\beta$.
So the desired assertions are a direct consequence of the Main Theorem and Corollary~\ref{c:main consequences}.
\end{proof}

\begin{proof}[Proof of Theorem~\ref{t:complex}]
By either~\cite[Theorem~A]{LiShe10b} or~\cite[Theorem~A]{Riv07}, the map~$f$ is backward contracting, and by either~\cite{KozvSt09} or~\cite[Corollary~$8.3$]{Riv07}, it is expanding away from critical points.
Then Fact~\ref{f:Polyshrink} implies that for each~$\beta > 1$ the map~$f$ satisfies the Polynomial Shrinking Condition with exponent~$\beta$.
So the desired assertions are a direct consequence of the Main Theorem and Corollary~\ref{c:main consequences}.
\end{proof}

\subsection{A reduction}
\label{ss:a reduction}
In this section we prove the Main Theorem assuming the following lemma, whose proof occupies \S\S\ref{s:constructing a free IMFS}, \ref{s:proof of Key Lemma}.

\begin{generic}[Key Lemma]
Let~$\beta > 1$, and let~$f$ be a map in~$\sA$ satisfying the Polynomial Shrinking of Components condition with exponent~$\beta$.
In the case~$f$ is an interval map, assume furthermore that~$f$ is topologically exact on its Julia set.
Then for every~$\alpha$ in~$(\beta^{-1}, 1]$, every H\"{o}lder continuous function $\varphi:J(f)\to \R$ of exponent~$\alpha$, and every invariant ergodic probability measure~$\nu$ supported on~$J(f)$, there is a set of full measure of points~$x_0$ such that
$$ \limsup_{n\rightarrow + \infty}\frac{1}{n}\log \sum_{y\in f^{-n}(x_0)}\exp \left( S_n(\varphi)(y) \right)
>
\int_{J(f)} \varphi \ d\nu. $$
\end{generic}

The proof of the Main Theorem is given after the following lemma.
In the case~$f$ is a complex rational map, the following lemma is~\cite[Lemma~$4$]{Prz90}.
The proof applies without change to the case where~$f$ is an interval map.
\begin{lemm}
\label{l:tree pressure}
For each map~$f$ in~$\sA$, and every continuous function~$\varphi : J(f) \to \R$, we have
$$ P(f, \varphi)
\ge
\limsup_{n\rightarrow + \infty}\frac{1}{n}\log \left( \sup_{x_0 \in J(f)} \sum_{y\in f^{-n}(x_0)} \exp (S_n\varphi(y)) \right). $$
\end{lemm}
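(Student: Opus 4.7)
I follow the classical Misiurewicz strategy for the variational principle, adapted to sums over preimages of a single point. For each $n \geq 1$ and $x_0 \in J(f)$ set $\Sigma_n(x_0) := \sum_{y \in f^{-n}(x_0)} \exp(S_n\varphi(y))$, pick $x_0^{(n)} \in J(f)$ with $\Sigma_n(x_0^{(n)}) \geq \tfrac{1}{2} \sup_{x_0 \in J(f)} \Sigma_n(x_0)$, and write $\lambda := \limsup_n \tfrac{1}{n}\log\Sigma_n(x_0^{(n)})$; I may assume $\lambda > -\infty$. Passing to a subsequence $(n_j)$ realising $\lambda$, associate the atomic probability measures
$$\mu_j := \frac{1}{\Sigma_{n_j}(x_0^{(j)})} \sum_{y \in f^{-n_j}(x_0^{(j)})} \exp\bigl(S_{n_j}\varphi(y)\bigr) \delta_y, \qquad \overline{\mu}_j := \frac{1}{n_j}\sum_{k=0}^{n_j-1}(f^k)_* \mu_j,$$
and, after a further subsequence, let $\overline{\mu}_j \rightharpoonup \mu$ weakly; a routine Krylov--Bogolyubov-type verification shows $\mu \in \cM(J(f), f)$.

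The direct computation $\log \Sigma_n(x_0^{(n)}) = H_{\mu_n}(\cP_n) + \int S_n\varphi \, d\mu_n$, with $\cP_n$ the singleton partition of $\supp(\mu_n)$, is the starting point. Choose a finite partition $\cU$ of $J(f)$ with $\mu(\partial \cU) = 0$ and small diameter; set $\cU^{(n)} := \bigvee_{k=0}^{n-1} f^{-k}\cU$ and $D_n := \max_{A \in \cU^{(n)}} |A \cap f^{-n}(x_0^{(n)})|$. An atom-by-atom application of Jensen's inequality (via concavity of $-x\log x$) gives $H_{\mu_n}(\cP_n) \leq H_{\mu_n}(\cU^{(n)}) + \log D_n$, hence
$$\log \Sigma_n(x_0^{(n)}) \leq H_{\mu_n}(\cU^{(n)}) + \int S_n\varphi \, d\mu_n + \log D_n.$$
Combining with the Misiurewicz subadditive decomposition $\tfrac{1}{n_j}H_{\mu_j}(\cU^{(n_j)}) \leq \tfrac{1}{q}H_{\overline{\mu}_j}(\cU^{(q)}) + \tfrac{2q\log|\cU|}{n_j}$ valid for every fixed $q \geq 1$, using the identity $\tfrac{1}{n_j}\int S_{n_j}\varphi\,d\mu_j = \int \varphi\,d\overline{\mu}_j \to \int \varphi\,d\mu$, and letting first $j \to \infty$ (with $\mu(\partial \cU^{(q)})=0$ holding by $f$-invariance of $\mu$) and then $q \to \infty$, one arrives at
$$\lambda \leq h_\mu(f, \cU) + \int \varphi\, d\mu + \limsup_{j\to\infty} \frac{\log D_{n_j}}{n_j}.$$

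The main obstacle is the subexponential-growth bound $\log D_{n_j}/n_j \to 0$ for $\cU$ of small enough diameter; once this is in hand, the variational principle \eqref{e:variational principle} delivers $\lambda \leq h_\mu(f) + \int\varphi\,d\mu \leq P(f,\varphi)$, completing the proof. In the complex case, this bound is precisely the geometric content of Przytycki's Lemma~$4$: any two preimages of $x_0$ in a common atom of $\cU^{(n)}$ lie in a common $(n, \operatorname{diam}(\cU))$-Bowen ball, and the finite topological degree of $f$ together with univalence of inverse branches of $f^n$ on small enough balls forces the per-atom multiplicity to grow at most polynomially in $n$. The same argument adapts to maps in $\sA_\R$ because such maps are local diffeomorphisms away from the finite critical set $\Crit(f)$, so that the number of preimages of $x_0$ sitting in a narrow orbit tube is controlled by the same combinatorics.
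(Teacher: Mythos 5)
The paper does not actually prove this lemma: it cites Przytycki's Lemma~$4$ of~\cite{Prz90} and asserts that the proof carries over to interval maps, and your Misiurewicz-type scheme (weighted atomic measures on $f^{-n}(x_0)$, Ces\`aro averages, a weak* limit, entropy comparison against a small-boundary partition) is essentially a reconstruction of that argument, so in outline you are on the same route as the cited source. The bookkeeping steps you give are fine: the identity $\log\Sigma_n=H_{\mu_n}(\cP_n)+\int S_n\varphi\,d\mu_n$, the conditional-entropy bound $H_{\mu_n}(\cP_n)\le H_{\mu_n}(\cU^{(n)})+\log D_n$, the subadditive decomposition, and the passage to the limit using $\mu(\partial\cU)=0$ are all standard.

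The genuine gap is the step you defer to at the end: the bound $\log D_{n_j}/n_j\to 0$ (for partitions of small diameter) is the entire mathematical content of the lemma, and your justification for it does not stand. First, it is circular: you cannot invoke ``the geometric content of Przytycki's Lemma~$4$'' when that lemma is exactly the statement under proof. Second, the reason you offer --- finite degree together with ``univalence of inverse branches of $f^n$ on small enough balls'' --- is false in the generality required here. The lemma is stated for every $f$ in $\sA$, with no Topological Collet--Eckmann, backward-contraction, or Polynomial Shrinking hypothesis available; such maps may have critical points in $J(f)$ with strongly recurrent critical orbits, critical values on or near periodic orbits, and parabolic or Cremer cycles, and then pull-backs of a fixed small ball by $f^n$ need not be univalent (their degrees can grow with the number of critical encounters), so an atom of $\cU^{(n)}$ can contain more than one, indeed many, points of $f^{-n}(x_0)$. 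What is true, and what a correct proof must establish, is quantitative: two distinct preimages that remain in a common atom can merge under $f$ only when they lie near a point of $\Crit(f)$, which forces the common shadowing orbit to pass within a distance of order $\varepsilon^{\ell}$ of $\CV(f)$ at every merging time; one must then show that, uniformly over all backward orbits terminating at $x_0$, the frequency of such near-critical-value passages (and hence the exponential growth rate of $D_n$) tends to $0$ as the partition diameter $\varepsilon\to 0$. This is delicate precisely in the configurations excluded by no hypothesis of the lemma (e.g.\ a critical value lying on a repelling cycle, where backward orbits can shadow the cycle for arbitrarily long times and pick up a branching at every period), and your proposal contains no argument for it; the same objection applies verbatim to your one-sentence treatment of the interval case. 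Until this counting estimate (or some substitute device) is proved for arbitrary maps in $\sA$, the proof is incomplete at its key point.
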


\begin{proof}[Proof of the Main Theorem assuming the Key Lemma]
We show first
\begin{equation}
\label{e:mean average versus invariant average}
\limsup_{n \rightarrow + \infty} \left( \sup_{J(f)}\frac{1}{n}S_n(\varphi) \right)
\le
\sup_{\nu\in \cM(J(f), f)} \int_{J(f)} \varphi \ d\nu.
\end{equation}
Since~$J(f)$ is compact, for each integer $n\ge 1$ there is a point~$x_n$ of~$J(f)$ such that
$$ S_n(\varphi)(x_n)
=
\sup_{J(f)} S_n(\varphi). $$
Put $\nu_n \= \frac{1}{n}\sum_{i=0}^{n-1}\delta_{f^i(x_n)}$, so that
$$ \int_{J(f)} \varphi \ d\nu_n
=
\frac{1}{n}S_n(\varphi)(x_n)
=
\sup_{J(f)}\frac{1}{n}S_n(\varphi). $$
It follows that there is a sequence positive integers $(n_k)_{k = 1}^{+ \infty}$ such that
$$ \lim_{k\rightarrow + \infty} \int_{J(f)} \varphi \ d\nu_{n_k}
=
\limsup_{n\rightarrow + \infty} \left( \sup_{J(f)}\frac{1}{n}S_n(\varphi) \right). $$
Let~$\nu$ be an accumulation point of~$\nu_{n_k}$ in the weak* topology.
Then~$\nu$ is in~$\cM(J(f), f)$, and
$$ \int_{J(f)} \varphi \ d\nu
=
\limsup_{n\rightarrow + \infty} \left( \sup_{J(f)}\frac{1}{n}S_n(\varphi) \right). $$
This proves~\eqref{e:mean average versus invariant average}.

To prove that~$\varphi$ is hyperbolic for~$f$, let~$\nu_0$ be an invariant probability measure maximizing the function~$\nu \mapsto \int_{J(f)} \varphi \ d \nu$.
Then for almost every ergodic component~$\nu_0'$ of~$\nu_0$, we have~$\int_{J(f)} \varphi \ d \nu_0' = \int_{J(f)} \varphi \ d \nu_0$.
Thus, the Key Lemma applied to such a~$\nu_0'$, together with Lemma~\ref{l:tree pressure} imply
$$ P(f, \varphi)
>
\int_{J(f)} \varphi \ d\nu_0'
=
\int_{J(f)} \varphi \ d\nu_0
=
\sup_{\nu \in \cM(J(f), f)} \int_{J(f)} \varphi \ d\nu. $$ 
Together with~\eqref{e:mean average versus invariant average}, this implies that~$\varphi$ is hyperbolic for~$f$ and completes the proof of the Main Theorem.
\end{proof}

\section{Iterated Multivalued Function Systems}
\label{s:constructing a free IMFS}
This section is devoted to the construction of an ``Iterated Multivalued Function System", which is the main ingredient in the proof of the Key Lemma, compare with~\cite[\S\S$5$, $6$]{InoRiv12}.
It is stated as Proposition~\ref{p:constructing a free IMFS}, below.

Given intervals~$B$ and~$W$ of~$\R$, a \emph{multivalued function $\phi : B \to W$} is a function that to each point~$x$ of~$B$ associates a non-empty subset~$\phi(x)$ of~$W$.
Note that each surjective function~$f : W \to B$ defines a multivalued function~$f^{-1} : B \to W$.
If~$\phi : B \to W$ and~$\tphi : \tB \to \tW$ are multivalued functions such that~$\tW$ is contained in~$B$, then the \emph{composition~$\phi \circ \tphi$ of~$\phi$ and~$\tphi$} is the multivalued function~$\phi \circ \tphi : \tB \to W$ defined for~$x$ in~$\tB$, by~$(\phi \circ \tphi)(x) \= \bigcup_{y \in \tphi(x)} \phi(y)$.
The composition of multivalued functions is easily seen to be associative.

Let~$f$ be a map in~$\sA$.
Given a compact and connected subset~$B_0$ of~$\dom(f)$ intersecting~$J(f)$, a sequence multivalued functions~$(\phi_l)_{l = 1}^{+ \infty}$ is an \emph{Iterated Multivalued Function System (IMFS) generated by~$f$}, if for every~$l$ there is an integer $m_l \ge 1$, and a pull-back~$W_l$ of~$B_0$ by~$f^{m_l}$ contained in~$B_0$, such that
$$ f^{m_l}(W_l) = B_0
\text{ and }
\phi_l = (f^{m_l}|_{W_l})^{-1}. $$
In this case, $(m_l)_{l = 1}^{+ \infty}$ is the \emph{time sequence of $(\phi_l)_{l = 1}^{+ \infty}$}, and \emph{$(\phi_l)_{l = 1}^{+ \infty}$ is defined on~$B_0$}.
Note that for each subset~$A$ of~$B_0$ and each~$l$, the set~$\phi_l(A) \= f^{-m_l}(A)\cap W_l$ is non-empty.

Let~$(\phi_l)_{l = 1}^{+ \infty}$ be an IMFS generated by~$f$ with time sequence~$(m_l)_{l = 1}^{+ \infty}$, defined on a set~$B_0$.
For each integer~$n \ge 1$ put $\Sigma_n \= \{1, 2, \ldots \}^n $ and denote the space of all finite words in the alphabet~$\{1, 2, \ldots, \}$ by $\Sigma^* \= \bigcup_{n\ge 1}\Sigma_n$.
For every integer~$k \ge 1$ and~$\ul = l_1 \cdots l_k$ in~$\Sigma^*$, put
$$ | \ul |=k,
m_{\ul} = m_{l_1}+m_{l_2}+\cdots +m_{l_k},
\text{ and }
\phi_{\ul} = \phi_{l_1}\circ\cdots\circ\phi_{l_k}. $$
Note that for every~$x_0$ in~$B_0$, and every pair of distinct words~$\ul$ and~$\ul'$ in~$\Sigma^*$ satisfying~$m_{\ul} = m_{\ul'}$, we have the following property:
\begin{center}
(*) \qquad
If the sets~$\phi_{\ul}(x_0)$ and~$\phi_{\ul'}(x_0)$ intersect, then they coincide.
\end{center}
The IMFS $(\phi_l)_{l = 1}^{+ \infty}$ is \emph{free}, if there is~$x_0$ in~$B_0$ such that for every pair of distinct words~$\ul$ and~$\ul'$ in~$\Sigma^*$ such that~$m_{\ul}=m_{\ul'}$, the sets~$\phi_{\ul}(x_0)$ and~$\phi_{\ul'}(x_0)$ are disjoint.

\begin{prop}
\label{p:constructing a free IMFS}
Let~$\beta > 1$, and let~$f$ be a map in~$\sA$ satisfying the Polynomial Shrinking of Components condition with exponent~$\beta$.
In the case~$f$ is an interval map, suppose furthermore that~$f$ is topologically exact on its Julia set.
Let~$\alpha$ in~$(\beta^{-1}, 1]$, let~$\varphi: J(f)\to \R$ be H{\"o}lder continuous with exponent~$\alpha$, and let~$\nu$ be an ergodic invariant probability measure on~$J(f)$ that is not supported on a periodic orbit.
Then there exists a subset~$X$ of~$J(f)$ of full measure with respect to~$\nu$, such that for every point~$x_0$ in~$X$ the following property holds: There exist~$D > 0$, a compact and connected subset~$B_0$ of~$\dom(f)$ containing~$x_0$, and a free IMFS~$(\phi_l)_{l=1}^{+ \infty}$ generated by~$f$ with time sequence~$(m_l)_{l=1}^{+ \infty}$, such that~$(\phi_l)_{l=1}^{+ \infty}$ is defined on~$B_0$, and such that for every~$l$ and every~$y$ in~$\phi_l(B_0)$ we have
\begin{equation}\label{e:fre}
S_{m_l}(\varphi)(y)
\ge
m_{l} \int_{J(f)} \varphi \ d \nu - D.
\end{equation}
\end{prop}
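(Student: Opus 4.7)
The plan is to realise the pull-backs $W_l$ as connected components of $f^{-m_l}(B_0)$ around carefully chosen exact preimages of~$x_0$ whose forward Birkhoff sum at time~$m_l$ is close to $m_l \int_{J(f)} \varphi \ d\nu$; the Polynomial Shrinking Condition together with the hypothesis $\alpha \beta > 1$ will transfer this control from the distinguished preimage to every point of~$W_l$ up to a uniform additive error. Take~$X$ to be the set of points~$x_0$ in $\supp(\nu)$ where the ergodic and recurrence limits used below hold. For each such~$x_0$, fix a small closed ball $B_0$ in $\dom(f)$ centered at~$x_0$ of radius~$r_0$, and set $A \= B(x_0, r_0/2)$, noting that $\nu(A) > 0$.

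The principal obstacle is to produce orbit pieces returning near~$x_0$ with a \emph{uniformly bounded} (rather than merely $o(n)$) Birkhoff deviation; I would address this via an induced system. Let $f_A : A \to A$ be the first-return map with return time~$n(\cdot)$, which preserves $\nu|_A$, and define the induced centered potential $g(z) \= S_{n(z)}\bigl( \varphi - \int_{J(f)} \varphi \ d\nu \bigr)(z)$. By Kac's formula, $g$ lies in $L^1(\nu|_A / \nu(A))$ and has zero mean. Atkinson's recurrence theorem applied to the ergodic system $(f_A, \nu|_A/\nu(A))$ and~$g$ yields, for $\nu|_A$-a.e. $z$, a finite constant $D_1 = D_1(z)$ and infinitely many~$k$ with
\[
S_{n_k(z)}(\varphi)(z) - n_k(z) \int_{J(f)} \varphi \ d\nu
=
\sum_{j=0}^{k - 1} g(f_A^j(z))
\ge
- D_1,
\]
where~$n_k(z)$ is the $k$-th return time of~$z$ to~$A$; note that $f^{n_k(z)}(z) \in A \subset B_0$ by construction.

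For each such~$k$, let~$W_k$ be the connected component of $f^{-n_k(z)}(B_0)$ containing~$z$. The Polynomial Shrinking Condition gives $\diam(W_k) \le C n_k(z)^{-\beta}$, so for large~$k$ we have $W_k \subset B_0$. In the complex case $f^{n_k(z)}$ maps~$W_k$ onto~$B_0$ as a proper branched cover, so there is $y_k \in W_k$ with $f^{n_k(z)}(y_k) = x_0$; in the real case the same surjectivity is ensured by choosing~$B_0$ so that $f^{n_k(z)}(\overline{W_k}) = \overline{B_0}$, invoking topological exactness of~$f$ on~$J(f)$. Since each $f^j(W_k)$ is itself a pull-back of~$B_0$ by $f^{n_k - j}$, it has diameter at most $C(n_k - j)^{-\beta}$; combining this with the H\"older continuity of~$\varphi$ of exponent~$\alpha$, for every $y, y' \in W_k$,
\[
\bigl| S_{n_k}(\varphi)(y) - S_{n_k}(\varphi)(y') \bigr|
\le
K \sum_{j=0}^{n_k - 1} (n_k - j)^{-\alpha \beta}
\le
K \sum_{m=1}^{+ \infty} m^{-\alpha \beta}
=:
D_2,
\]
which is finite precisely because $\alpha \beta > 1$. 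Thus every $y \in W_k$ satisfies~\eqref{e:fre} with $D \= D_1 + D_2$.

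It remains to arrange freeness. Since $\diam(W_k) \to 0$ and~$\nu$ is not supported on a periodic orbit (so the preimages~$y_k$ do not all coalesce at a single accumulation point), I would extract from $(W_k)$ a subsequence $(W_{l_k})$ of \emph{pairwise disjoint} pull-backs, and set $m_l \= n_{l_k}(z)$ and $\phi_l \= (f^{m_l}|_{W_l})^{-1}$. Freeness at~$x_0$ then follows by induction on the word length: for distinct $\ul, \ul'$ with $m_\ul = m_{\ul'}$, either $l_1 \neq l'_1$, in which case $\phi_\ul(x_0) \subset W_{l_1}$ and $\phi_{\ul'}(x_0) \subset W_{l'_1}$ are disjoint by construction, or $l_1 = l'_1$, in which case one uses that $\phi_{l_1}$ sends disjoint subsets of~$B_0$ into disjoint subsets of~$W_{l_1}$ to reduce disjointness of $\phi_\ul(x_0)$ and $\phi_{\ul'}(x_0)$ to disjointness of preimages under shorter words.
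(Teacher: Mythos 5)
Your construction cannot deliver freeness as written, and freeness is the heart of the proposition. Each of your pull-backs $W_k$ is \emph{by definition} the connected component of $f^{-n_k(z)}(B_0)$ containing the same point $z$, so any two of them intersect; it is therefore impossible to ``extract a subsequence of pairwise disjoint pull-backs'', and the base case of your induction (distinct first letters) has nothing to stand on. (The induction skeleton itself would be fine if the $W_l$ were pairwise disjoint, but that premise is unobtainable in your scheme, and without it nothing prevents, say, $\phi_{l_1 l_4}(x_0)$ and $\phi_{l_2 l_3}(x_0)$ from meeting when $m_{l_1}+m_{l_4}=m_{l_2}+m_{l_3}$.) The paper resolves exactly this difficulty with a marker mechanism that has no counterpart in your proposal: it produces the controlled points as \emph{backward} orbit points $x_l\in f^{-n_l}(x_0)$ via the natural extension (Lemma~\ref{l:maximizing branches}), takes an accumulation point $w_0$ of the $x_l$, builds two \emph{disjoint} pull-backs $U_0,U_1$ of a neighborhood of $w_0$ inside $B(x_0,\rho)$ (using Lemma~\ref{l:one-sided covering} in the real case, and the non-preperiodicity of $x_0$, which is where the hypothesis that $\nu$ is not supported on a periodic orbit actually enters), routes every branch through $U_0$, and, after interchanging $y_0$ and $y_1$ and passing to a subsequence, arranges that the connecting point $f^{n_{l+1}-n_l-M'}(x_{l+1})$ lies outside $U_0'$; this is precisely what forces $\phi_{\ul}(x_0)$ and $\phi_{\ul'}(x_0)$ to be disjoint when $m_{\ul}=m_{\ul'}$. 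Your parenthetical appeal to ``the preimages $y_k$ do not all coalesce'' plays no such role.

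The second gap is the requirement $f^{m_l}(W_l)=B_0$ in the definition of an IMFS. In the real case, ``choosing $B_0$ so that $f^{n_k(z)}(\overline{W_k})=\overline{B_0}$, invoking topological exactness'' is not an argument: interval maps are not open, and the image of a pull-back of an interval under the corresponding iterate can be a proper subinterval, whatever ball you started with. The paper needs Lemma~\ref{l:almost properness} to get $f^{n}(\partial W)\subset\partial B(x_0,\rho)$ for sufficiently small pull-backs, then replaces the centered ball by a one-sided interval $[x_0-\rho,x_0]$ or $[x_0,x_0+\rho]$ according to which side is covered, and in the second case inserts a further auxiliary pull-back (the time $\tM$) to force the image to contain the correct side while keeping $W_l''\subset B_0$; your centered ball and first-return scheme do not engage with this issue at all. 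By contrast, your positive ingredients are sound and parallel the paper's: the Atkinson/ergodic-theorem step producing infinitely many times with Birkhoff deviation bounded by a constant matches Lemma~\ref{l:maximizing branches} (the paper applies the ergodic lemma to $T^{-1}$ on the natural extension rather than to a first-return map, which is a harmless variation), and the transfer of the bound from the distinguished point to all of $W_l$ via $\sum_{m\ge 1}m^{-\alpha\beta}<+\infty$ is exactly Lemma~\ref{l:bounded distortion}. Inequality~\eqref{e:fre} would indeed follow from these once a correct $B_0$, genuinely surjective branches, and a working freeness mechanism are supplied.
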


The proof of this proposition is at the end of this section.

\begin{lemm}
\label{l:almost properness}
For each interval map~$f : I \to I$ in~$\sA$ there is~$\varepsilon > 0$ such that the following property holds.
Let~$J_0$ be an interval contained in~$I$ satisfying~$|J_0| \le \varepsilon$, let~$n \ge 1$ be an integer, and let~$J$ be a pull-back of~$J_0$ by~$f^n$ whose closure is contained in the interior of~$I$.
Suppose in addition that for each~$j$ in~$\{1, \ldots, n \}$ the pull-back of~$J_0$ by~$f^j$ containing~$f^{n - j}(J)$ has length bounded from above by~$\varepsilon$.
Then~$f^n(\partial J) \subset \partial J_0$.
\end{lemm}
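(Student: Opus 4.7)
The plan is to prove the lemma by a direct topological argument, using only the definition of a pull-back as a connected component of $f^{-n}(J_0)$ together with the continuity of $f^n$. The essential input is the hypothesis $\overline{J}\subset \interior(I)$, which places every point of $\partial J$ in $\interior(I)$ and so provides two-sided neighborhoods in $I$. The uniform size bound $\varepsilon$ and the length bounds on the intermediate pull-backs $J^{(j)} \supset f^{n-j}(J)$ serve to fix a scale at which the successive pull-backs cannot interact pathologically with $\partial I$ or with the finitely many critical values of $f$.

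First, I would fix $\varepsilon > 0$ smaller than, say, half the distance from any critical value of $f$ lying in $\interior(I)$ to $\partial I$, and smaller than the minimal separation between distinct critical values of $f$; this ensures that each intermediate pull-back $J^{(j)}$, which has length at most $\varepsilon$ by hypothesis, stays uniformly inside $\interior(I)$ and contains at most one critical point of $f$. Next, given $x\in \partial J$, I would show $f^n(x)\in \partial J_0$ by two cases according to whether $x\in J$ or not. If $x\in J$ and, for contradiction, $f^n(x)\in \interior(J_0)$, then continuity of $f^n$ yields an open neighborhood $U\subset I$ of $x$ with $f^n(U)\subset J_0$; such $U$ is a connected subset of $f^{-n}(J_0)$ meeting the component $J$, so by maximality of components $U\subset J$, making $x$ an interior point of $J$ and contradicting $x\in \partial J$. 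If instead $x\notin J$, then $x\notin f^{-n}(J_0)$ while $x\in \overline{J}$, so continuity forces $f^n(x)\in \overline{J_0}\setminus J_0\subset \partial J_0$.

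The main obstacle, as the paper emphasizes, is that interval maps are not open in general: the successive pull-backs of $J_0$ can have different open/closed structure on each side, and without the size control $|J^{(j)}|\le \varepsilon$ one cannot a priori prevent an intermediate set from touching $\partial I$ or accumulating critical behavior in a way that would invalidate the direct argument. The intermediate pull-back hypothesis is exactly what rules out such pathologies along the orbit, ensuring the two-case contradiction above is available uniformly. Once $\varepsilon$ is so chosen, the dichotomy itself is short and delivers $f^n(\partial J)\subset \partial J_0$ directly, without any need for induction on $n$.
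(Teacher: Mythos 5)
Your two-case argument is correct, and it is a genuinely different (and more elementary) route than the paper's. The paper proves the lemma by induction along the orbit: it first chooses $\varepsilon$ so that pull-backs of short intervals sharing an endpoint with $I$ have a controlled structure and so that an interval of length at most $\varepsilon$ meets at most one point of $\Crit(f) \cup \partial I$, and then it propagates, step by step through the intermediate pull-backs $J_j$, the dichotomy ``$f^j(\partial J_j) \subset \partial J_0$, or $J_j$ touches $\partial I$ and its other endpoint maps into $\partial J_0$''; the smallness hypotheses are what make that intermediate dichotomy work, since the $J_j$ may well touch $\partial I$ or contain a turning point. Your argument bypasses all of this: since $\overline{J} \subset \interior(I)$, every endpoint $x$ of $J$ admits a two-sided neighborhood in $I$, and then continuity of $f^n$ plus maximality of the connected component $J$ of $f^{-n}(J_0)$ gives the conclusion directly --- if $x \in J$ and $f^n(x)$ were in the interior of $J_0$ (interior and boundary taken in $\R$, i.e.\ the endpoints of the intervals), a whole two-sided neighborhood of $x$ would lie in $f^{-n}(J_0)$ and hence in $J$, contradicting $x \in \partial J$; if $x \notin J$, then $x \notin f^{-n}(J_0)$ (otherwise $J \cup \{x\}$ would be a larger connected subset of $f^{-n}(J_0)$, a point worth stating explicitly), so $f^n(x) \in \overline{J_0} \setminus J_0 \subset \partial J_0$. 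What your approach buys is that the conclusion as stated needs neither $|J_0| \le \varepsilon$ nor the length bounds on the intermediate pull-backs; what the paper's approach buys is the finer information about the intermediate pull-backs themselves (including those meeting $\partial I$), which is where the non-openness of $f$ genuinely bites. One criticism of your write-up: the first and last paragraphs are misleading, since the choice of $\varepsilon$ via critical values and the claim that the intermediate-pull-back hypothesis is ``exactly what'' makes the two-case argument available are never used in your actual proof --- the proof rests only on continuity, component maximality, and $\partial J \subset \interior(I)$ --- so either delete that framing or make precise where (nowhere, in fact) those hypotheses enter.
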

\begin{proof}
  Note that each preimage of a point of~$\partial I$ is either a point of~$\partial I$, or a turning point in the interior of~$I$.
It follows that there is~$\varepsilon$ in~$(0, |I|)$ such that for every interval~$J'$ contained in~$I$ that shares an endpoint with~$I$ and satisfies~$|J'| \le \varepsilon$, the following property holds: For each pull-back~$J$ of~$J'$ by~$f$, either~$J$ shares an endpoint with~$I$ and~$f : J \to J'$ is a bijection, or both endpoints of~$J$ are mapped to the endpoint of~$J'$ that is not an endpoint of~$I$.
Reducing~$\varepsilon$ if necessary, assume that for every pair of distinct elements~$c$ and~$c'$ of~$\Crit(f) \cup \partial I$, we have~$\varepsilon < \dist(c, c')$.
We prove the lemma for this choice of~$\varepsilon$.

Let~$J_0$, $n$, and~$J$ be as in the statement of the lemma, and for each~$j$ in~$\{1, \ldots, n - 1 \}$ let~$J_j$ be the pull-back of~$J_0$ by~$f^j$ containing~$f^{n - j}(J)$.
Note that~$J_n = J$, and that our hypotheses imply that for each~$j$ in~$\{0, \ldots, n \}$ we have~$|J_j| \le \varepsilon$.
We prove by induction that for each~$j$ in~$\{0, \ldots, n \}$, either~$f^j(\partial J_j) \subset \partial J_0$, or~$J_j$ shares an endpoint with~$I$ and~$f^j$ maps the endpoint of~$J_j$ that is not an endpoint of~$I$ to an endpoint of~$J_0$; the lemma follows by taking~$j = n$.
The case~$j = 0$ being trivial, let~$j$ in~$\{1, \ldots, n \}$ be such that this property holds with~$j$ replaced by~$j - 1$.
If~$J_{j - 1}$ shares an endpoint with~$I$, then the desired assertion follows from the induction hypothesis and our choice of~$\varepsilon$.
Suppose~$J_{j - 1}$ does not share an endpoint with~$I$.
If~$J_j$ contains a turning point in its interior, then by our choice of~$\varepsilon$ the interval~$J_j$ does not contain any other turning point of~$f$.
It follows that both endpoints of~$J_j$ are mapped to the same end point of~$J_{j - 1}$ by~$f$.
So by the induction hypothesis we have~$f^j(\partial J_j) \subset \partial J_0$.
It remains to consider the case where~$J_j$ does not contain a turning point of~$f$ in its interior.
Then~$f$ is injective on~$J_j$.
Thus, either~$f : J_j \to J_{j - 1}$ is a bijection, or~$J_j$ shares an endpoint with~$I$.
In the former case we have~$f^j(\partial J_j) = f^{j - 1}(\partial J_{j - 1}) \subset \partial J_0$.
In the latter case, $f$ maps the endpoint~$x_j$ of~$J_j$ that is not an endpoint of~$I$ to an endpoint of~$J_{j - 1}$, so by the induction hypothesis~$f^j(x_j)$ is in~$\partial J_0$.
This completes the proof of the induction step and of the lemma.
\end{proof}

\begin{lemm}
\label{l:one-sided covering}
Let~$f$ be an interval map in~$\sA$ that is topological exact on its Julia set, and let~$x_0$ be a point of~$J(f)$ such that~$(x_0, +\infty)$ (resp. $(-\infty, x_0)$) intersects~$J(f)$. 
Then for every open interval~$U$ intersecting~$J(f)$, and every sufficiently large integer~$n \ge 1$, there is a point~$y$ of~$U$ in~$f^{-n}(x_0)$ such that for every $\eps>0$ the set $f^n(B(y,\varepsilon))$ intersects~$(x_0, +\infty)$  (\text{resp.} $(-\infty, x_0)$).
\end{lemm}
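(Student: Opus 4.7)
The plan is to locate $y$ as an endpoint, lying strictly inside $U$, of a connected component of the open set
$$ V := \{u \in U : f^n(u) > x_0\}; $$
the one-sided image property will then be automatic.

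First I would apply topological exactness of $f$ on $J(f)$: since $U \cap J(f)$ is a non-empty relatively open subset of $J(f)$, there is an integer $N \ge 1$ such that $f^n(U \cap J(f)) = J(f)$ for every $n \ge N$. Fix any such $n$.

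Next I would analyze $V$. On the one hand, $x_0 \in J(f) = f^n(U \cap J(f))$ gives a preimage of $x_0$ inside $U$, so $V \ne U$. On the other hand, the hypothesis $J(f) \cap (x_0, +\infty) \ne \emptyset$ combined with $f^n(U \cap J(f)) = J(f)$ produces a point of $U$ whose $f^n$-image lies in $(x_0, +\infty)$, so $V \ne \emptyset$. Since $V$ is open in $\R$, it is a disjoint union of open intervals; let $(\alpha, \beta)$ be any connected component. Because $(\alpha, \beta) \subsetneq U$, not both endpoints of this interval can coincide with the endpoints of $U$, so $\{\alpha, \beta\} \cap U$ is non-empty; pick $y$ in this intersection. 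Then $y \in U$, and since $y$ lies on the boundary of the open set $V$ but not in $V$ itself, continuity of $f^n$ gives $f^n(y) \le x_0$ and, because $y$ is a limit point of $V$, also $f^n(y) \ge x_0$, hence $f^n(y) = x_0$. Finally, for every $\varepsilon > 0$ the ball $B(y, \varepsilon)$ meets $(\alpha, \beta) \subset V$, so $f^n(B(y, \varepsilon))$ meets $(x_0, +\infty)$, as required. The symmetric case where $(-\infty, x_0)$ intersects $J(f)$ is handled identically by replacing $V$ with $\{u \in U : f^n(u) < x_0\}$.

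I do not foresee a serious obstacle. The only point that needs care is the elementary topological observation that a proper non-empty open subset of an open interval $U$ has a connected component with at least one finite endpoint lying in $U$; this is the mechanism that forces the candidate $y$ into the open interval $U$ rather than onto its boundary, and together with the description of $y$ as a boundary point of $V$ it is what rules out the bad scenario in which $y$ is an interior local maximum of $f^n$ at height $x_0$.
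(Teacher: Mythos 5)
Your proof is correct, and it reaches the conclusion by a mildly different mechanism than the paper. The paper also starts from topological exactness to get $n\ge N$ with $f^n(U)\supset J(f)$ and an auxiliary point $z\in U$ with $f^n(z)\in(x_0,+\infty)$; it then uses that $f^{-n}(x_0)$ is finite (from piecewise monotonicity, a consequence of having finitely many non-flat critical points), chooses $y$ to be the preimage of $x_0$ in $U$ \emph{closest} to $z$, and argues by contradiction: if some ball $B(y,\eps_0)$ mapped into $(-\infty,x_0]$, the intermediate value theorem applied between a point of that ball and $z$ would produce a preimage of $x_0$ in $U$ strictly closer to $z$. You instead construct $y$ directly as a finite endpoint, lying in $U$, of a connected component of the open superlevel set $V=\{u\in U: f^n(u)>x_0\}$, which is nonempty and properly contained in $U$ by exactness; continuity then forces $f^n(y)=x_0$, and the one-sided covering property is built into the construction. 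This buys you two small simplifications: no need for finiteness of $f^{-n}(x_0)$ and no argument by contradiction; what the paper's extremal choice buys (closeness of $y$ to $z$) is not needed for the statement. The only point to make explicit in your write-up is that $U$ is a subinterval of the domain of $f$ (as it is in all the paper's applications), so that $V$ is indeed open in $\R$ and each component is a bounded open interval; with that reading, your endpoint argument, including the observation that a proper nonempty open subset of $U$ has a component with an endpoint interior to $U$, is sound, and it yields the conclusion for every $n\ge N$, exactly as in the paper.
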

\begin{proof}
Using that~$f$ is topological exact on its Julia set $J(f),$ we know that there is an integer $N\geq 1$ such that for every $n\geq N$ we have $f^n(U)\supset J(f)$.
Fix $n\geq N$.
Note that the set~$f^{-n}(x_0)$ is finite, and there is a point~$z$ of~$U$ such that~$f^n(z)$ is in $(x_0, +\infty)$ (resp. $(-\infty, x_0)$).
Let~$y$ be a point of $f^{-n}(x_0)$ in~$U$ such that for every $y'$ of $f^{-n}(x_0)$ in~$U$ we have $|y-z|\leq |y'-z|.$ Now let us prove the lemma holds for such $y.$ In fact, otherwise, there is $\eps_0\in (0, |y-z|)$ such that~$f^n(B(y,\eps_0))$ is contained in~$(-\infty, x_0]$ (resp. $[x_0, +\infty)$).
It follows that there is a point~$z'$ of $B(y, \eps_0)\cap U$ such that~$f^n(z')$ is in $(-\infty, x_0)$ (resp. $(x_0, +\infty)$) and $|z'-z|<|y-z|$.
Since $f^n$ is continuous on~$U$ it follows that there is $y''$ in $U$ such that $|y''-z|<|y-z|$ and $f^n(y'')=x_0.$ This is a contradiction with our choice of $y.$ The lemma is proved.
\end{proof}

\begin{lemm}
\label{l:bounded distortion}
Let~$\beta > 1$, and let~$f$ be a map in~$\sA$ satisfying the Polynomial Shrinking Condition with exponent~$\beta$.
Then for every~$\alpha$ in~$(\beta^{-1}, 1]$ and every H{\"o}lder continuous function $\varphi: J(f)\rightarrow \R$ of exponent~$\alpha$ there exist constants $\rho_1 > 0$ and~$C_1 > 1$, such that for every point~$z$ of~$J(f)$, every integer~$n\ge 1$, and every pull-back~$W$ of~$B(z,\rho_1)$ by~$f^n$ the following holds: For every~$x$ and~$x'$ in~$W$ we have
$$ |S_{n}(\varphi)(x)-S_{n}(\varphi)(x')|\le C_1. $$
\end{lemm}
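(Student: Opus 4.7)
The plan is to telescope the Birkhoff difference over the orbit and then apply the Polynomial Shrinking Condition at each iterate, using the hypothesis $\alpha\beta > 1$ to obtain a convergent series bound. First, I would set $\rho_1 \= \rho_0$, where $\rho_0$ and $C_0$ are the constants from the Polynomial Shrinking Condition, and let $K > 0$ denote the H\"{o}lder constant of $\varphi$, so that $|\varphi(u)-\varphi(u')|\le K\dist(u,u')^{\alpha}$ for every $u, u'$ in~$J(f)$. For $x, x'$ in a pull-back~$W$ of~$B(z,\rho_1)$ by~$f^n$, the triangle inequality gives
$$
|S_{n}(\varphi)(x)-S_{n}(\varphi)(x')|
\le
\sum_{j=0}^{n-1}|\varphi(f^j(x))-\varphi(f^j(x'))|
\le
K\sum_{j=0}^{n-1}\dist(f^j(x),f^j(x'))^{\alpha}.
$$

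The central observation is then the following: for each $j$ in $\{0,1,\ldots,n-1\}$, the set $f^j(W)$ is the continuous image of the connected set $W$, hence connected, and satisfies $f^{n-j}(f^j(W))=f^n(W)\subset B(z,\rho_1)$. Therefore $f^j(W)$ is contained in a single connected component of $f^{-(n-j)}(B(z,\rho_1))$, that is, in a pull-back of~$B(z,\rho_1)$ by~$f^{n-j}$. Applying the Polynomial Shrinking Condition to this pull-back yields
$$
\dist(f^j(x),f^j(x'))\le \diam(f^j(W))\le C_0(n-j)^{-\beta}.
$$

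Substituting this bound and re-indexing with $k=n-j$, I would obtain
$$
|S_{n}(\varphi)(x)-S_{n}(\varphi)(x')|
\le
KC_0^{\alpha}\sum_{k=1}^{n}k^{-\alpha\beta}
\le
KC_0^{\alpha}\sum_{k=1}^{+\infty}k^{-\alpha\beta}.
$$
Since $\alpha>\beta^{-1}$ we have $\alpha\beta>1$, so the series converges, and taking $C_1$ to be this (finite) sum (or slightly larger, so that $C_1 > 1$) concludes the proof.

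The argument is essentially routine; the only point that deserves care, and the main ``obstacle'' to streamline, is the justification that $f^j(W)$ sits inside a genuine pull-back of $B(z,\rho_1)$. This matters especially in the interval-map case, where $f$ need not be open, so $f^j(W)$ itself need not be a pull-back; however, connectedness of $W$ suffices to place it in a single component of $f^{-(n-j)}(B(z,\rho_1))$, which is all the Polynomial Shrinking Condition requires. Everything else is a direct telescoping estimate together with convergence of a $p$-series.
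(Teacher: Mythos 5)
Your proof is correct and follows essentially the same route as the paper: telescope the Birkhoff difference, bound each term by the H\"older constant times $\diam(f^j(W))^\alpha$, and use the Polynomial Shrinking Condition via the fact that $f^j(W)$ lies in a single pull-back of $B(z,\rho_0)$ by $f^{n-j}$, so that the series $\sum_k k^{-\alpha\beta}$ converges since $\alpha\beta>1$. Your explicit justification that the connected set $f^j(W)$ sits inside one component of $f^{-(n-j)}(B(z,\rho_1))$ is a point the paper leaves implicit, and it is handled correctly.
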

\begin{proof}
Let~$C_* \ge 1$ be such that for every~$z$ and~$z'$ in~$J(f)$ we have
$$ |\varphi(z)-\varphi(z')|
\le
C_* \dist (z, z')^\alpha. $$
Noticing that $f$ satisfies the Polynomial Shrinking Condition with
exponent~$\beta$, there exist constants~$\rho_0 > 0$ and~$C_0 > 1$ such that for every point~$z$ in~$J(f)$, every integer $n \ge 1$, and every pull-back~$W'$ of~$B(z,\rho_0)$ by~$f^n$, we have $\diam (W') \le C_0 n^{-\beta}$.
Therefore, for every integer~$n \ge 1$, every point~$z$ in~$J(f)$ and every pair of points~$x$ and~$x'$ in the same pull-back~$W$ of~$B(z, \rho_0)$ by~$f^n$, we have
\begin{align*}
|S_n(\varphi)(x) - S_n(\varphi)(x')|
& =
\left| \sum_{i=0}^{n-1} \varphi(f^i(x))- \sum_{i=0}^{n-1}\varphi(f^i(x')) \right|
\\ &
\le
\sum_{i=0}^{n-1} C_*\diam (f^i(W))^{\alpha}
\\ & \le
\sum_{i=0}^{n-1}C_* C_0^{\alpha} (n-i)^{- \beta \alpha}
\\ & \le
C_*C_0^{\alpha}\sum_{m = 1}^{+ \infty} m^{- \beta\alpha}.
\end{align*}
This proves the lemma with constants~$\rho_1 = \rho_0$ and~$C_1 = C_*C_0^{\alpha}\sum_{m = 1}^{+ \infty} m^{-\beta\alpha}$.
\end{proof}

\begin{lemm}
\label{l:maximizing branches}
Let~$f$ be a map in~$\sA$, let $\varphi: J(f)\rightarrow \R$ be a continuous function, and let~$\nu$ be an invariant and ergodic Borel probability measure supported on~$J(f)$.
Then there exists a Borel subset~$X'$ of~$J(f)$ of full measure with respect to~$\nu$, such that the following holds.
For every~$D' > 0$, and every point~$x$ of~$X'$ there exist a strictly increasing sequence positive integers $(n_l)_{l = 1}^{+ \infty}$ such that for every~$l$ we can choose a point~$x_l$ in~$f^{-n_l}(x)$ so that:
\begin{enumerate}
\item[1.]
$x_{l + 1}$ is in~$f^{-(n_{l + 1} - n_l)}(x_l)$;
\item[2.]
$S_{n_l}(\varphi)(x_l)\ge n_l\int_{J(f)} \varphi \ d\nu -D'$.
\end{enumerate}
\end{lemm}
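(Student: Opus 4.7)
The plan is to reduce the statement to an Atkinson-type recurrence theorem, applied on the natural extension of~$(J(f), f, \nu)$. Set $\chi \= \int_{J(f)} \varphi \ d\nu$ and $g \= \varphi - \chi$, so that~$g$ is bounded on~$J(f)$ and has zero $\nu$-integral. What I must produce, for $\nu$-a.e.~$x$ and every $D' > 0$, is a coherent backward orbit $x_l \in f^{-n_l}(x)$ with $f^{n_{l+1} - n_l}(x_{l+1}) = x_l$ along which $S_{n_l}(g)(x_l) \ge - D'$ infinitely often. Birkhoff's theorem applied to~$f$ only controls forward averages, giving $S_n(g)(y) = o(n)$ for $\nu$-a.e.~$y$; this is too weak for a bounded-deviation conclusion, and in any case does not speak about preimages of a prescribed base point.

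My first step is to pass to the natural extension $(\hJ, \widehat{f}, \hnu)$ of~$(J(f), f, \nu)$, whose points are histories $\widehat{x} = (x_0, x_{-1}, x_{-2}, \dots)$ satisfying $f(x_{-k-1}) = x_{-k}$. This renders~$\widehat{f}$ an invertible measure-preserving transformation; the projection $\pi(\widehat{x}) \= x_0$ satisfies $\pi_* \hnu = \nu$, and ergodicity of~$\nu$ lifts to~$\hnu$. The pointwise identity
$$ S_N(\varphi)(x_{-N}) = \sum_{k=1}^{N} \varphi(x_{-k}) $$
reinterprets the backward-orbit Birkhoff sum of~$\varphi$ as a Birkhoff sum for the invertible ergodic transformation~$\widehat{f}^{-1}$ evaluated on the bounded, mean-zero observable $\widehat{g} \= (g \circ \pi) \circ \widehat{f}^{-1}$.

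Next I will invoke Atkinson's recurrence theorem for real cocycles: for any ergodic measure-preserving system and any~$L^1$ observable of zero mean, the sequence of ergodic sums has $\liminf = 0$ almost surely. Applied to $(\hJ, \widehat{f}^{-1}, \hnu)$ and~$\widehat{g}$, this produces a full-$\hnu$-measure set $\hX' \subseteq \hJ$ on which $\liminf_N |S_N(\varphi)(x_{-N}) - N\chi| = 0$. Consequently, for every $\widehat{x} \in \hX'$ and every $D' > 0$ there are infinitely many~$N$ with $S_N(\varphi)(x_{-N}) \ge N\chi - D'$; extracting such~$N$'s yields the subsequence $(n_l)$, and setting $x_l \= x_{-n_l}$ gives a backward orbit with the required Birkhoff lower bound, the coherence condition $f^{n_{l+1} - n_l}(x_{l+1}) = x_l$ holding automatically from the shift structure on~$\hJ$.

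Finally, to descend from~$\hJ$ to~$J(f)$, I will disintegrate $\hnu = \int_{J(f)} \hnu_x \ d\nu(x)$ with $\hnu_x$ supported on $\pi^{-1}(x)$; then $\hnu_x(\hX') = 1$ for $\nu$-a.e.~$x$, permitting the choice of a history $\widehat{x} \in \hX'$ above each such~$x$, and~$X'$ can be taken Borel by inner regularity. The main obstacle of the argument is the need for \emph{bounded} deviation $|S_{n_l}(\varphi)(x_l) - n_l \chi| \le D'$ rather than the $o(n_l)$ deviation furnished by Birkhoff; this is precisely the content of Atkinson's recurrence theorem and is the non-trivial ingredient of the proof. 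The special case in which~$\nu$ is supported on a single periodic orbit is handled trivially by taking $x_l \= x$ and letting $(n_l)$ run through multiples of the period, which yields even $D' = 0$.
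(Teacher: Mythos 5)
Your proposal is correct and follows essentially the same route as the paper: pass to the natural extension, view the backward sums $S_N(\varphi)(x_{-N})$ as ergodic sums of a mean-adjusted observable for the inverse shift, extract infinitely many times at which they are within $D'$ of $N\int_{J(f)}\varphi \, d\nu$, and project down to get $X'$. The only differences are minor: the paper obtains these times from the elementary fact (Lemma~\ref{lem:ergodic}, a consequence of the pointwise ergodic theorem) that centered ergodic sums have $\limsup \ge 0$ almost surely, whereas you invoke the stronger Atkinson recurrence theorem, which certainly suffices; and you settle the measurability of the set of good base points by disintegrating the lifted measure rather than by projecting the full-measure set directly.
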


To prove this lemma we use the natural extension of~$f|_{J(f)}$, that we proceed to recall.
Let $\Z_{-}$ denote the set of all non-positive integers and endow
$$ \cZ
\=
\left\{ (z_m)_{m\in \Z_{-}} \in J(f)^{\Z_{-}} : \text{ for every } m \in \Z_-, f(z_{m - 1})=z_m \right\} $$
with the product topology.
Define $T: \cZ\rightarrow \cZ$ by
$$ T \left( (\cdots,z_{-2},z_{-1},z_0) \right)
=
(\cdots,z_{-2},z_{-1},z_0, f(z_0)) $$
and $\pi: \cZ\rightarrow J(f)$ by $\pi((z_m)_{m\in \Z_{-}}) = z_0$.
Note that~$T$ is a bijection, $T^{-1}$ is measurable, $\pi$ is continuous and onto, and~$\pi\circ T= f\circ \pi$.
We call~$(\cZ,T)$ the \emph{natural extension of $(J(f), f)$}.
If~$\nu$ is a Borel probability measure~on~$J(f)$ that is invariant and ergodic for~$f$, then there exists a unique Borel probability measure~$\tnu$ on~$\cZ$ that is invariant and ergodic for~$T$, and that satisfies~$\pi_* \tnu=\nu$, see for example~\cite[\S$2.7$]{PrzUrb10}.

The following is a well-known consequence of the pointwise ergodic theorem, see for example~\cite[Lemma~$1.3$]{PrzRivSmi04} for a proof.
\begin{lemm}\label{lem:ergodic}
Let $(\cZ, \sB, \tnu)$ be a probability space, and let $T : \cZ \rightarrow \cZ$ be an ergodic measure preserving transformation.
Then for each function $\psi: \cZ \rightarrow \R$ that is integrable with respect to~$\tnu$, there exists a subset~$Z$ of~$\cZ$ such that~$\tnu(Z) = 1$, and such that for every~$\underline{z}$ in~$Z$ we have
$$ \limsup_{n\rightarrow + \infty}\sum_{i=0}^{n-1} \left(\psi(T^i(\underline{z})) - \int_{\cZ} \psi \ d\tnu \right)
\ge
0. $$
\end{lemm}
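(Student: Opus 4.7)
The plan is to reduce the statement to an inequality about the Birkhoff sums of a zero-mean function, and then argue by contradiction using the pointwise ergodic theorem applied twice: once to the function itself, and once to the indicator of a certain bad set.

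Set $\bar\psi \= \int_{\cZ} \psi \ d\tnu$ and $\tpsi \= \psi - \bar\psi$, so that $\tpsi \in L^1(\tnu)$ and $\int_{\cZ} \tpsi \ d\tnu = 0$. Writing $S_n\tpsi(\underline{z}) \= \sum_{i=0}^{n-1} \tpsi(T^i(\underline{z}))$, the claim reduces to showing that $\limsup_{n \to + \infty} S_n \tpsi(\underline{z}) \ge 0$ for $\tnu$-a.e.~$\underline{z}$. First, Birkhoff's pointwise ergodic theorem (combined with the ergodicity of~$T$) yields a full measure set $Z_0 \subseteq \cZ$ on which $\frac{1}{n} S_n \tpsi \to 0$.

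Suppose, for contradiction, that the set $\{\underline{z} : \limsup_n S_n \tpsi(\underline{z}) < 0\}$ has positive $\tnu$-measure. By countable additivity there exist an integer $N \ge 1$ and a constant $c > 0$ such that
$$ B \= \{ \underline{z} \in \cZ : S_n \tpsi(\underline{z}) \le -c \text{ for every } n \ge N \} $$
has $\alpha \= \tnu(B) > 0$. Apply Birkhoff once more, this time to $\mathbf{1}_B$, to obtain a full measure subset $Z_1 \subseteq Z_0$ along whose orbits the visits to~$B$ have asymptotic density~$\alpha$. Fix $\underline{z} \in Z_1$ and enumerate its visit times $i_1 < i_2 < \cdots$; the cocycle relation together with the definition of~$B$ gives, for every $k$ and every $j \ge N$,
$$ S_{i_k + j} \tpsi(\underline{z}) - S_{i_k} \tpsi(\underline{z}) = S_j \tpsi(T^{i_k}(\underline{z})) \le -c. $$
Extract greedily a subsequence $j_1 < j_2 < \cdots$ of $(i_k)$ with consecutive gaps at least~$N$. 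Since each greedy step skips at most $N$ elements of $(i_k)$, the Kac-type control $i_k / k \to 1/\alpha$ (itself a consequence of Birkhoff applied to $\mathbf{1}_B$) ensures $j_l / l \le (N+1)/\alpha + o(1)$. Iterating the displayed inequality along $(j_l)$ yields $S_{j_l}\tpsi(\underline{z}) \le -c l$, whence
$$ \liminf_{l \to + \infty} \frac{S_{j_l}\tpsi(\underline{z})}{j_l} \le - \frac{c \alpha}{N+1} < 0, $$
which contradicts $\frac{1}{n} S_n\tpsi(\underline{z}) \to 0$ along the full sequence. The contradiction proves the claim.

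The main obstacle is the greedy subsequence extraction: one must verify that, after thinning the visit times to satisfy the spacing condition $j_{l+1} - j_l \ge N$, the resulting subsequence still has positive lower density. The key input is precisely the Kac-type estimate on the density of returns to~$B$, which comes from the Birkhoff theorem applied to $\mathbf{1}_B$, so the whole argument uses only the pointwise ergodic theorem together with elementary combinatorics.
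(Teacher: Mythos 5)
Your argument is correct. Note that the paper does not actually prove this lemma: it is stated as a known consequence of the pointwise ergodic theorem, with the proof outsourced to the citation [PrzRivSmi04, Lemma~$1.3$]. What you give is a complete, self-contained proof along the standard lines: reduce to a zero-mean $\tpsi$, note that Birkhoff gives $\frac{1}{n}S_n\tpsi \to 0$ a.e., and derive a contradiction from a positive-measure set $B = \{S_n\tpsi \le -c \text{ for all } n \ge N\}$ (obtained by exhausting the bad set over rational $c$ and integer $N$) by using Birkhoff again on $\mathbf{1}_B$ to get return times of density $\alpha = \tnu(B) > 0$, thinning them greedily to gaps at least $N$ while retaining linear growth $j_l = O(l)$, and telescoping the cocycle identity to force $S_{j_l}\tpsi(\underline{z})$ to decrease linearly in $l$, hence linearly in $j_l$, contradicting $\frac{1}{n}S_n\tpsi \to 0$. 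Two cosmetic slips, neither of which affects the argument: the telescoping actually gives $S_{j_l}\tpsi(\underline{z}) \le S_{j_1}\tpsi(\underline{z}) - c(l-1)$ rather than $-cl$, but the additive constant is irrelevant asymptotically; and the precise constant $(N+1)/\alpha$ in the bound $j_l/l \le (N+1)/\alpha + o(1)$ does not matter --- all that is needed is that each greedy step discards at most $N-1$ visit times, so that $j_l \le C l$ for some finite $C$ and all large $l$. With those small points tidied up, this is a perfectly acceptable replacement for the citation, using nothing beyond the pointwise ergodic theorem.
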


\begin{proof}[Proof of Lemma~\ref{l:maximizing branches}]
Let~$(\cZ, T)$ and~$\tnu$ be as above, and note that~$\tnu$ is also ergodic with respect to~$T^{-1}$.
Applying Lemma~\ref{lem:ergodic} to the continuous function~$\psi = \varphi \circ \pi$, we obtain that there exists a subset~$Z$ of~$\cZ$ of full measure with respect to~$\tnu$, such that for every point~$(z_m)_{m \in \Z_{-}}$ in~$Z$ we have
$$\limsup_{n\rightarrow + \infty}\sum_{i=0}^{n-1}\left(\varphi\circ\pi \left( T^{-i} \left( (z_m)_{m \in \Z_{-}} \right) \right) - \int_{\cZ} \varphi\circ\pi \ d\tnu\right)
\ge
0. $$
Note that the set $X' \= \pi(Z)$ satisfies
$$ \nu(X') = \tnu \left( \pi^{-1}(\pi (Z)) \right) \ge \tnu(Z) = 1, $$
so~$\nu(X') = 1$.
To verify that~$X'$ satisfies the desired properties, let~$D' > 0$, let~$x$ be a point of~$X'$, and choose a point~$(z_m)_{m \in \Z_{-}}$ of~$Z$ such that~$\pi \left( (z_m)_{m\in \Z_{-}} \right) = x$.
Then there is a strictly increasing sequence positive integers $(n_l)_{l = 1}^{+ \infty}$ such that for every~$l$ we have
\begin{equation}
\label{e:erg}
\sum_{i=0}^{n_l - 1} \varphi \circ \pi \left( T^{-i} \left( (z_m)_{m \in \Z_{-}} \right) \right)
\ge
n_l\int_{J(f)} \varphi \ d\nu - D'.
\end{equation}
For each integer~$l \ge 1$ put
$$ x_l
\=
\pi \left( T^{-n_l} \left( (z_m)_{m\in \Z_{-}} \right) \right)
=
z_{n_l}
\in
f^{-n_l}(z). $$
The first part is then a direct consequence of the definitions, and the second follows from~\eqref{e:erg}.
\end{proof}

\begin{proof}[Proof of Proposition~\ref{p:constructing a free IMFS}]
In the real case, denote by~$I$ the domain of~$f$.

Let~$X'$ be the subset of~$J(f)$ given by Lemma~\ref{l:maximizing branches}, and let~$X$ be the complement in~$X'$ of the set of preperiodic points of~$f$.
Since~$\nu$ is ergodic and it is not supported on a periodic orbit, the set~$X$ has full measure for~$\nu$.
Fix a point~$x_0$ of~$X$.
In the real case assume~$x_0$ is not an endpoint of~$I$.

Let~$\varepsilon > 0$ be the constant given by Lemma~\ref{l:almost properness}, and let~$\rho_1 > 0$ and~$C_1 > 1$ be the constants given by Lemma~\ref{l:bounded distortion}.
Moreover, let~$\rho_0 > 0$ and~$C_0 > 1$ be such that for every~$z$ in~$J(f)$, every integer $n \ge 1$, and every pull-back~$W$ of~$B(z,\rho_0)$ by~$f^n$, we have
$$ \diam (W)
\le
\min \{ C_0 n^{-\beta}, \varepsilon \}. $$
Fix~$\rho$ in~$\left(0, \min \{\rho_0, \rho_1 \} \right)$.
In the real case, assume in addition that~$\rho < \dist(x_0, \partial I)$.

In part~$1$ below we define the IMFS, and in part~$2$ we show it is free and that it satisfies~\eqref{e:fre}.

\partn{1}
Let~$D'$, $(n_l)_{l = 1}^{+ \infty}$, and~$(x_l)_{l = 1}^{+ \infty}$ be given by Lemma~\ref{l:maximizing branches} with~$x = x_0$.
Taking a subsequence if necessary, assume~$(x_l)_{l = 1}^{+ \infty}$ converges to a point~$w_0$.
In the complex case, using that~$f$ is topologically exact on~$J(f)$, there exist an integer~$M \ge 1$ and distinct points~$y_0$ and~$y_1$ of~$f^{-M}(w_0)$ in~$B(x_0, \rho)$.
Let~$\rho'$ in $(0, +\infty)$ be such that the pull-backs~$U_0$ and~$U_1$ of~$B(w_0,\rho')$ by~$f^M$ containing~$y_0$ and~$y_1$, respectively, are disjoint and contained in~$B(x_0, \rho)$.
Note that, since in the complex case the map~$f$ is open, for every sufficiently large~$l$ the point~$x_l$ is contained in~$f^M(U_0)$ and in~$f^M(U_1)$.
In the real case, assume without loss of generality that for infinitely many integers~$l$ the point~$x_l$ satisfies~$x_l > w_0$.
Note that~$x_0$ is not in the boundary of a periodic Fatou component, since by hypothesis~$x_0$ is not preperiodic.
Therefore we can assume that there are~$2$ disjoint open intervals~$\tU_0$ and~$\tU_1$ in~$(x_0 - \rho, x_0)$, each of them intersecting~$J(f)$.
Since~$f$ is topologically exact on~$J(f)$ and~$w_0$ is in~$J(f)$, by Lemma~\ref{l:one-sided covering} there is an integer~$M \ge 1$, and distinct points~$y_0$ and~$y_1$ of~$f^{-M}(w_0)$ in~$\tU_0$ and~$\tU_1$, respectively, such that for every~$\eps>0$ each of the sets~$f^M(B(y_0, \eps))$ and~$f^M(B(y_1, \eps))$ intersects~$(w_0, +\infty)$.
Let~$\rho'>0$ be such that the pull-backs~$U_0$ and~$U_1$ of~$B(w_0,\rho')$ by~$f^M$ containing~$y_0$ and~$y_1$, respectively, are contained in~$\tU_0$ and~$\tU_1$, respectively.
It follows that~$U_0$ and~$U_1$ are disjoint and contained in~$(x_0 - \rho, x_0)$ and that for infinitely many~$l$ the point~$x_l$ is contained in~$f^M(U_0)$ and in~$f^M(U_1)$.
Using the Polynomial Shrinking Condition and taking a subsequence if necessary, assume in both, the real and complex cases, that for every integer~$l \ge 1$ we have~$n_{l + 1} - n_l \ge M$, that the point~$x_l$ is contained in~$f^M(U_0)$ and in~$f^M(U_1)$, and that the pull-back~$W_l$ of~$\overline{B(x_0,\rho)}$ by~$f^{n_l}$ containing~$x_l$ is contained in~$B(w_0,\rho')$.
Interchanging~$y_0$ and~$y_1$, and taking a subsequence if necessary, assume that for every~$l$ the point~$f^{n_{l + 1} - n_l - M}(x_{l + 1})$ is not in~$U_0$.
For each~$l$ choose a pull-back~$W_l'$ of~$W_l$ by~$f^M$ that contains a point~$x_l'$ of~$f^{-M}(x_l)$ and that is contained in~$U_0$.

In the complex case put
$$ B_0 \= \overline{B(x_0, \rho)},
M' \= M,
U_0' \= U_0, $$
and for each~$l$ put~$W_l'' \= W_l'$.
Note that for each~$l$ we have
$$ W''_l \subset U_0' \subset B_0
\text{ and }
f^{n_l + M'}(W_l'') = B_0. $$

In the real case, note that~$W_l'$ is contained in~$U_0 \subset B(x_0, \rho)$, so the closure of~$W_l'$ is contained in the interior of~$I$.
So by Lemma~\ref{l:almost properness} the set~$f^{n_l + M}(\partial W_l')$ is contained in~$\partial B(x_0, \rho)$.
Thus, for each~$l$ the set~$f^{n_l + M}(W_l')$ contains either~$[x_0 - \rho, x_0]$ or~$[x_0, x_0 + \rho]$.
Suppose there are infinitely many~$l$ such that~$f^{n_l + M}(W_l')$ contains~$[x_0 - \rho, x_0]$.
Taking a subsequence if necessary, assume this holds for every~$l$.
Then for every~$l$ there is a pull-back~$W_l''$ of~$[x_0 - \rho, x_0]$ by~$f^{n_l + M}$ that is contained in~$W_l'$ and such that~$f^{n_l + M}(W_l'') = [x_0 - \rho, x_0]$.
In this case we put
$$ B_0 \= [x_0 - \rho, x_0],
M' \= M,
\text{ and }
U_0' \= U_0, $$
and note that~$W_l'' \subset W_l' \subset U_0' \subset [x_0 - \rho, x_0] = B_0$.
It remains to consider the case where for each~$l$, outside finitely many exceptions, the set~$f^{n_l + M}(W_l')$ contains~$[x_0, x_0 + \rho]$, but it does not contain~$[x_0 - \rho, x_0]$.
Taking a subsequence if necessary, assume this holds for every~$l$.
Since~$x_0$ is not in the boundary of a Fatou component, by Lemma~\ref{l:one-sided covering} there is an integer~$\tM \ge 1$ and a pull-back~$U_0'$ of~$U_0$ by~$f^{\tM}$ that is contained in~$(x_0, x_0 + \rho)$, and such that for infinitely many~$l$ the point~$x_l'$ is contained in~$f^{\tM}(U_0')$.
Taking a subsequence if necessary, assume that for every~$l$ we have~$n_{l + 1} - n_l \ge M + \tM$, and that the point~$x_l'$ is contained in~$f^{\tM}(U_0')$.
Since for each~$l$ the point~$f^{n_{l + 1} - n_l - M}(x_{l + 1})$ is not in~$U_0$, it follows that the point~$f^{n_{l + 1} - n_l - M - \tM}(x_{l + 1})$ is not in~$U_0'$.
For each~$l$ choose a pull-back~$\tW_l'$ of~$W_l'$ by~$f^{\tM}$ contained in~$U_0'$ and that contains a point of~$f^{- \tM}(x_l')$.
By Lemma~\ref{l:almost properness}, the set~$f^{n_l + M + \tM}(\partial \tW_l')$ is contained in~$\partial B(x_0, \rho)$.
Since the set~$f^{n_l + M + \tM}(\tW_l')$ is contained in~$f^{n_l + M}(W_l')$ and this last set does not contain~$[x_0 - \rho, x_0]$, we conclude that~$f^{n_l + M + \tM}$ maps both endpoints of~$\tW_l'$ to~$x = x_0 + \rho$.
Since by construction~$f^{n_l + M + \tM}(\tW_l')$ contains~$x = x_0$, we conclude that~$f^{n_l + M + \tM}(\tW_l')$ contains~$[x_0, x_0 + \rho]$.
So there is a pull-back~$W_l''$ of~$[x_0, x_0 + \rho]$ by~$f^{n_l + M + \tM}$ that is contained in~$\tW_l'$, and such that~$f^{n_l + M + \tM}(W_l'') = [x_0, x_0 + \rho]$. 
Note that~$W_l'' \subset \tW_l' \subset U_0' \subset (x_0, x_0 + \rho)$.
In this case we put~$B_0 \= [x_0, x_0 + \rho]$, and~$M' \= M + \tM$.

In both, the real and complex cases, for each~$l$ we put
$$ \phi_l \= \left( f^{n_l + M'}|_{W_l''} \right)^{-1}. $$
Then, $(\phi_l)_{l = 1}^{+ \infty}$ is an IMFS generated by~$f$ with time sequence~$(m_l)_{l=0}^{+ \infty} \= (n_l + M')_{l=0}^{+ \infty}$ that is defined on $B_0$.
Moreover, for each~$l$ we have
$$ n_{l + 1} - n_l \ge M',
W_l'' \subset U_0',
\text{ and }
f^{n_{l + 1} - n_l - M'}(x_{l + 1}) \not \in U_0'. $$

\partn{2}
To prove that the IMFS~$(\phi_l)_{l = 1}^{+ \infty}$ is free, let~$k \ge 1$ and $k'\geq 1$ be integers and let
$$ \ul\=l_1 l_2\cdots l_k
\text{ and }
\ul'\=l'_1 l'_2\cdots l'_{k'} $$
be different words in~$\Sigma^*$ such that~$m_{\ul} = m_{\ul'}$.
Assume without loss of generality that $l_{k'}' \ge l_{k} + 1$.
Note that the set
$$ f^{m_{\ul}-m_{l_k}}(\phi_{\ul}(x_0))
=
\phi_{l_k}(x_0). $$
is contained~$W_{l_k}''$, and therefore in~$U_0'$.
On the other hand, we have
$$ m_{l'_{k'}} - m_{l_k}
=
n_{l'_{k'}} - n_{l_k}
\ge
n_{l_k + 1} - n_{l_k}
\ge
M' $$
and therefore the set
\begin{multline*}
f^{m_{\ul} - m_{l_k}}(\phi_{\ul'}(x_0))
=
f^{m_{\ul'} - m_{l_k}}(\phi_{\ul'}(x_0))
=
f^{m_{l'_{k'}}-m_{l_k}}(\phi_{l'_{k'}}(x_0))
\\ =
f^{m_{l'_{k'}}-m_{l_k} - M'} \left( \left(f^{n_{l_{k'}'}}|_{W_{l_{k'}'}} \right)^{-1}(x_0) \right)
\end{multline*}
contains the point
$$ f^{m_{l_{k'}'} - m_{l_k} - M'} (x_{l_{k'}'})
=
f^{n_{l_{k'}'} - n_{l_k} - M'} (x_{l_{k'}'})
=
f^{n_{l_k + 1} - n_{l_k} - M'} (x_{l_k + 1}). $$
By construction this point is not in~$U_0'$, so we conclude that the sets
$$ f^{m_{\ul} - m_{l_k}}(\phi_{\ul}(x_0))
\text{ and }
f^{m_{\ul} - m_{l_k}}(\phi_{\ul'}(x_0)) $$
are different.
This implies that the sets~$\phi_{\ul}(x_0)$ and~$\phi_{\ul'}(x_0)$ are different, and by property~$(*)$ stated above the statement of the proposition, that they are disjoint.
This completes the proof that the IMFS $(\phi_l)_{l = 1}^{+ \infty}$ is free.

Finally, let us check inequality~\eqref{e:fre} in the statement of the proposition.
Recall that for every~$l$ and~$y$ in~$\phi_l(B_0)$, the point~$f^{M'}(y)$ is in~$W_l$.
Thus, by Lemma~\ref{l:bounded distortion} and by part~$2$ of Lemma~\ref{l:maximizing branches} we have
\begin{align*}
S_{m_l}(\varphi)(y)
& =
S_{n_l}(\varphi)(f^{M'}(y)) + S_{M'}(\varphi)(y)
\\ & \ge
S_{n_l}(\varphi)(x_l) - C_1 + S_{M'}(\varphi)(y)
\\ &
\ge n_l\int_{J(f)} \varphi \ d \nu - D' - C_1 +S_{M'}(\varphi)(y)
\\ & \ge
m_l\int_{J(f)} \varphi \ d \nu -D' - C_1 - 2M' \sup_{J(f)} \varphi.
\end{align*}
This proves~\eqref{e:fre} with $D = D'  + C_1 + 2 M' \sup_{J(f)} \varphi$, and completes the proof of the proposition.
\end{proof}

\section{Proof of the Key Lemma}
\label{s:proof of Key Lemma}
In this section we complete the proof of the Key Lemma.
The case where the measure~$\nu$ is supported on a periodic orbit is different.
For the complex case we refer to~\cite[Proposition~$4.1$]{InoRiv12}.
The proof of~\cite[Proposition~$4.1$]{InoRiv12} does not apply directly to interval maps, as it uses that complex rational maps are open as maps acting on~$\CC$.
The case of interval maps is treated in Lemma~\ref{l:periodic case}, below.
The proof of the Key Lemma is completed after this lemma.

\begin{lemm}
\label{l:periodic case}
Let~$\beta > 1$, and let~$f$ be an interval map in~$\sA$ that is topologically exact on its Julia set, and that satisfies the Polynomial Shrinking of Components condition with exponent~$\beta$.
Then for every $\alpha$ in $(\beta^{-1}, 1]$, every H{\"o}lder continuous function $\varphi: J(f)\to \R$ with exponent $\alpha$, and every periodic point~$x_0$
in $J(f)$  of~$f$ of period~$N$, we have
\begin{equation}
\label{e:periodic gap pressure}
\limsup_{n\rightarrow + \infty}\frac{1}{n}\log \sum_{y\in f^{-n}(x_0)} \exp (S_n(\varphi)(y))
>
\frac{1}{N} S_N(\varphi)(x_0).
\end{equation}
\end{lemm}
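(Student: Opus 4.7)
The plan is to adapt the construction of Proposition \ref{p:constructing a free IMFS} to the periodic case by building a free IMFS on a compact neighborhood of $x_0$ whose first branch is the inverse branch of $f^N$ fixing $x_0$, and then extracting the strict inequality from a Poincar\'e-series argument.

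First I would establish the ``first branch''. Since $x_0 \in J(f)$ is periodic and $f$ satisfies the Polynomial Shrinking Condition, no iterate of $x_0$ is a critical point of $f$: otherwise pull-backs of a ball around $x_0$ by $f^{kN}$ would contain a square-root-type singular pull-back and violate the polynomial diameter decay. Hence $f^N$ admits an inverse branch $\phi_1$ with $\phi_1(x_0) = x_0$, defined on a neighborhood of $x_0$. Iterating $\phi_1$ if necessary---which preserves the ``first-branch rate'' $A := \tfrac{1}{N}S_N\varphi(x_0)$---and invoking the Polynomial Shrinking Condition, I would choose a compact connected neighborhood $B_0$ of $x_0$ (an interval, possibly one-sided if $x_0$ is a neutral periodic point or near $\partial I$, in the real case) contained in $B(x_0,\min\{\rho_0,\rho_1\})$ so that $\phi_1(B_0) \subsetneq B_0$ and $J(f) \cap (B_0\setminus\phi_1(B_0)) \ne \emptyset$; the last condition is achievable because $J(f)$ is an infinite perfect set and $\diam(\phi_1^k(B_0)) \to 0$.

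Next I would produce the second branch. Since $f$ is topologically exact on $J(f)$, preimages of $x_0$ are dense in $J(f)$, so there exist $M \ge 1$ and a preimage $y_0 \in f^{-M}(x_0) \cap (B_0\setminus\phi_1(B_0))$; by the Polynomial Shrinking Condition, for $M$ large the pull-back $W_2$ of $B_0$ by $f^M$ containing $y_0$ lies in $B_0\setminus \phi_1(B_0)$. In the real case, Lemma \ref{l:almost properness}---together with Lemma \ref{l:one-sided covering} if a one-sided argument is forced by the position of $x_0$ or $y_0$ relative to $\partial I$---guarantees that $f^M(W_2) = B_0$, exactly as in Part~1 of the proof of Proposition \ref{p:constructing a free IMFS}. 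Setting $\phi_2 := (f^M|_{W_2})^{-1}$, the pair $(\phi_1,\phi_2)$ is a free IMFS on $B_0$: disjointness of $\phi_1(B_0)$ and $\phi_2(B_0)$ separates compositions by first letter, and freeness propagates inductively on word length.

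Finally, the strict inequality is extracted from the Poincar\'e series of this IMFS. An iterated application of Lemma \ref{l:bounded distortion} yields, for each word $\underline{l}$ of length $k$, the bound $S_{m_{\underline{l}}}\varphi(\phi_{\underline{l}}(x_0)) \ge \sum_i v_{l_i} - kC_1$, where $v_1 := NA$ and $v_2 := S_M\varphi(y_0)$; summing over all words and using freeness gives
\[
\sum_{n\ge 1} e^{-sn}\!\!\sum_{y \in f^{-n}(x_0)} \!\!e^{S_n\varphi(y)}
\;\ge\;
\sum_{k\ge 1}\bigl(e^{-C_1}\Phi_0(s)\bigr)^{k},
\qquad
\Phi_0(s) := e^{N(A-s)} + e^{v_2 - sM},
\]
so the tree pressure at $x_0$ is bounded below by the unique $s^*$ satisfying $\Phi_0(s^*) = e^{C_1}$; since $\Phi_0(A) = 1 + e^{v_2 - MA} \ge 1$, it suffices to arrange $\Phi_0(A) > e^{C_1}$. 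The main technical obstacle is precisely this calibration when the distortion constant $C_1$ is sizable: I would address it by enlarging the IMFS to $L$ disjoint branches inside $B_0 \setminus \phi_1(B_0)$, selecting $L$ preimages of $x_0$ under $f^M$ using topological exactness and preferring those close to $x_0$ so that their orbits shadow the periodic orbit and give Birkhoff rates close to $A$. For $L$ large enough---which is achievable by taking $M$ large, as $J(f)\cap B_0$ contains infinitely many points and preimages of $x_0$ grow in number with the topological entropy implied by our hypotheses---the enlarged Poincar\'e series diverges at $s = A$, forcing $s^* > A$ and hence~\eqref{e:periodic gap pressure}.
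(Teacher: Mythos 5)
Your overall architecture (a free induced system on a compact interval $B_0$ around $x_0$, with one branch the inverse of $f^N$ fixing $x_0$, plus a Poincar\'e-series comparison) is the same as the paper's, which builds a two-symbol induced map $\whf = f^{2Nk_0}|_{U_0\cup U_1}$ and bounds the associated series. But your quantitative step has a genuine gap, and you have located it yourself without repairing it: with the bound $S_{m_{\ul}}\varphi(\phi_{\ul}(x_0)) \ge \sum_i v_{l_i} - kC_1$, every use of the fixed branch $\phi_1$ pays the distortion constant $C_1$, so at $s=A$ your lower bound is a geometric series with base $e^{-C_1}(1+e^{v_2-MA})$, which in general is $<1$; no strict inequality follows. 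Your proposed repair --- enlarging the alphabet to $L$ preimages of $x_0$ under $f^M$ ``close to $x_0$'' with Birkhoff rates near $A$ --- is not justified: a point $y\in f^{-M}(x_0)$ near $x_0$ at time $0$ need not stay near the periodic orbit at times $1,\dots,M-1$, so nothing controls $S_M\varphi(y)$; and in the relevant case where $\tfrac1N S_N\varphi(x_0)=\sup_{\nu}\int\varphi\,d\nu$, producing enough preimages at a single time $M$ with averages near $A$ to beat $e^{C_1}-1$ is essentially the conclusion of the lemma, so this step is circular as stated.

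The missing idea, which is the heart of the paper's proof, is that a \emph{run} of the contracting branch costs only one constant, not one constant per letter: $\phi_1^k(B_0)$ is a single pull-back of $B_0\subset B(x_0,\rho_1)$ by $f^{kN}$, so one application of Lemma~\ref{l:bounded distortion} gives $S_{kN}(\varphi)\ge kNA - C_1$ on it, with $C_1$ independent of $k$ (this is the estimate~\eqref{eq:b1} in the paper). Decomposing each word into maximal blocks of the form $1^k2$, the penalty becomes $2C_1$ per block, and the block generating function $\Phi(s)=e^{v_2-2C_1-sM}\sum_{k\ge0}e^{kN(A-s)}$ diverges as $s\to A^+$, which yields some $s_0>A$ with $\Phi(s_0)\ge1$ and hence~\eqref{e:periodic gap pressure} --- no calibration of $C_1$ and no extra branches are needed. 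Separately, in the real case your assertion that $f^M(W_2)=B_0$ ``exactly as in Part~1 of Proposition~\ref{p:constructing a free IMFS}'' is too quick: Lemma~\ref{l:almost properness} only gives $f^M(\partial W_2)\subset\partial B_0$, so the image may cover only one of the two sides of $x_0$, and one must choose the side of $B_0$ compatibly with the orientation of the inverse branch fixing $x_0$ (the paper works with $f^{2N}$ to make this branch increasing and uses Lemma~\ref{l:one-sided covering} to place the covering preimage on the correct side); this bookkeeping is handled in the paper's Part~1 and cannot simply be cited away.
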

\begin{proof}
Let~$\rho_0 > 0$ and~$C_0 > 1$ be such that for every~$z$ in~$J(f)$, every integer $n \ge 1$, and every pull-back~$W$ of~$B(z,\rho_0)$ by~$f^n$, we have~$\diam (W)\le C_0 n^{-\beta}$.
The proof is divided in~$2$ parts.
In part~$1$ we construct an induced map, and in part~$2$ we show an inequality analogous to~\eqref{e:periodic gap pressure} for the induced map, from
which~\eqref{e:periodic gap pressure} follows as a direct consequence.

\partn{1}
Fix a periodic point~$x_0$ in~$J(f)$ of period~$N$.
Since~$|(f^N)'(x_0)|\geq 1$, we know that there is~$\rho$ in~$(0, \rho_0)$ such that there is a local inverse~$\phi$ of~$f^{2N}$ defined on~$B(x_0, \rho)$ and fixing~$x_0$.
Note that~$f^{2N}\circ \phi$ is the identity map on~$B(x_0,\rho)$, hence~$\phi$ is increasing on~$B(x_0,\rho)$, and~$f^{2N}$ is increasing on~$\phi(B(x_0,\rho))$.
Since~$x_0$ is in $J(f)$, changing orientation and reducing~$\rho$ if necessary,
assume that~$(x_0,x_0+\rho/2)$ intersects~$J(f)$, and that for every~$y$ in~$(x_0, x_0 + \rho)$ we have~$\phi(y) < y$.
Since~$f$ is topologically exact on its Julia set, by Lemma~\ref{l:one-sided covering} there exist an integer $k'\geq 1$, and a point~$z'$ in $(x_0, x_0+\rho/2)$, such that~$f^{2Nk'}(z') = x_0$, and such that for every~$\varepsilon > 0$ the set~$f^{2Nk'}(B(z', \varepsilon))$ intersects~$(x_0, x_0 + \rho/2)$.
Fix~$\varepsilon$ in~$(0, |z'-x_0|)$ such that~$f^{2Nk'}(B(z', \varepsilon)) \subset B(x_0,\rho/2)$.
Note that the closure of $B(z',\varepsilon)$ is contained in
$(x_0, x_0+\rho).$

Let~$W$ be the pull-back of~$f^{2Nk'}(B(z', \varepsilon))\cap [x_0, x_0+\rho/2)$ by~$f^{2Nk'}$ containing~$z'$.
Since $f^{2Nk'}$, and hence $\phi^{k'}$, is continuous, reducing~$\varepsilon$ if necessary, assume that
$ U_0' \= \phi^{k'} \left(f^{2Nk'} (W) \right) $
is disjoint from~$\overline{W}$.
By our choice of~$\phi$, and the hypothesis that~$f$ satisfies the Polynomial Shrinking of Components condition, we know that for every $x$ in $(x_0, x_0+\rho)$ we
have $\lim_{k\to +\infty} \diam (\phi^k([x_0, x]))=0$.
This implies that for every interval $U\subset [x_0, x_0+\rho)$ we have 
\begin{equation*}
\lim_{k\to +\infty}\diam(\phi^k(U))=0
\text{ and }
\lim_{k\to +\infty} \dist(\phi^k(U), x_0)=0.
\end{equation*}
Noting that
$$ W\subset B(z',\varepsilon)\subset (x_0, x_0+\rho)
\text{ and }
x_0 \in f^{2Nk'}(W)\subset [x_0, x_0+\rho/2), $$
it follows that there is $k_1\geq 0$ such that
\begin{equation}
\label{e:secondary domain}
U_1 \= \phi^{k_1} \left( W \right) \subset f^{2Nk'}(W),
\end{equation}
and
\begin{equation}
\label{e:secondary domain diameter}
\diam (\phi^{k_1+k'}(f^{2Nk'}(W)))<\diam(f^{2Nk'}(W)).
\end{equation}
Put
$ k_0 \= k_1+k'
\text{ and }
U_0\=\phi^{k_1}(U_0'). $
Then we have
$$ k_0\geq 1,
U_0 \cap U_1 = \emptyset,
\text{ and }
U_1 \subset f^{2Nk'} \left( W \right). $$
By~\eqref{e:secondary domain diameter} and the fact that~$f^{2Nk'} \left(W \right)$ contains~$x_0$, the set
$$ U_0 = \phi^{k_1}(U_0') = \phi^{k_0} \left( f^{2Nk'} \left( W \right) \right) $$
is contained in~$f^{2Nk'} \left( W \right)$.
Finally, note that
$$ f^{2Nk_0}(U_1)
=
f^{2Nk'} \left( W \right)
=
f^{2Nk_0}(U_0). $$
Put
$$ U \=U_0\cup U_1
\text{ and }
\whf \= f^{2Nk_0}|_U. $$

\partn{2}
Put $\hvarphi\= \frac{1}{2Nk_0} S_{2Nk_0}(\varphi)$, for every integer $m\geq 1$ put
$$ \hS_m(\hvarphi)
\=
\hvarphi +\hvarphi\circ \whf+\cdots +\hvarphi\circ \whf^{m-1}, $$
and note that to prove the lemma it suffices to show
\begin{equation}
  \label{e:induced gap pressure}
\limsup_{m\rightarrow + \infty}\frac{1}{m}\log \sum_{y\in \whf^{-m}(x_0)} \exp (\hS_m(\hvarphi)(y))
>
\hvarphi(x_0).
\end{equation}
This is equivalent to show that the radius of convergence of the series
$$ \Xi(s)
\=
\sum_{m = 0}^{+ \infty} \left( \sum_{z \in \whf^{-m}(x_0)} \exp \left( \hS_m(\hvarphi)(z) \right) \right) s^m $$
is strictly less than~$\exp(- \hvarphi(x_0))$.
The proof of this fact is similar to Case~$1$ of the proof of~\cite[Proposition~$4.1$]{InoRiv12}.
We include it here for completeness.

Put $\hK\= \bigcap_{i= 0}^{+ \infty} \whf^{-i}(U)$ and observe that~$x_0$ is contained in this set.
Consider the itinerary map
$$ \iota: \hK \to \{0,1\}^{\{1, 2, \ldots \}} $$
defined so that for every~$i$ in~$\{1, 2, \ldots \}$ the point~$\whf^i(z)$ is in~$U_{\iota(z)_i}$.
Since~$\whf$ maps each of the sets~$U_0$ and~$U_1$ onto $f^{2Nk'} \left( \overline{B(z', \varepsilon)} \right)$, and both of~$U_0$ and~$U_1$ are contained in this
set, for every integer~$k\geq 0$ and every sequence $a_0,a_1, \ldots, a_k$ of elements of~$\{0,1\}$ there is a point of~$\whf^{-(k+1)}(x_0)$ in the set
$$ \hK(a_0 a_1 \cdots a_k)
\=
\left\{z\in \hK: \text{ for every $i$ in $\{0,1,\cdots, k\}$ we have } \iota(z)_i=a_i\right\}. $$
By the Polynomial Shrinking of Components condition and our choice of~$\phi$ and~$U_0$, there is a constant~$\hC>0$ such that for
every integer $k\geq 1$ and every point~$z$ in $\hK(\underbrace{0\cdots0}_k)$, we have
\begin{equation}\label{eq:b1}
\hS_k(\hvarphi)(z) \ge k\hvarphi(x_0) - \hC.
\end{equation}
Taking~$\hC$ larger if necessary, assume that for every point~$z$ in~$U$ we have
 \begin{equation}\label{e:2}
\hvarphi(z)
\ge
\hvarphi(x_0) - \hC.
\end{equation}
It follows that for every~$k\geq 0$ and every sequence $a_0, a_1, \cdots, a_k$ of elements of $\{0,1\}$ with $a_0=1$ and every point $x$ in $\hK(a_0a_1\cdots
a_k),$ we have
\begin{equation}
\label{eq:b2}
\hS_{k+1}(\hvarphi)(x)\geq (k+1)\hvarphi(x_0)-2(a_0+a_1+\cdots+a_k)\hC.
\end{equation}
In fact, put $\ell\= a_0+\cdots +a_k$, $i_{\ell+1}\=k+1$, and let
$$ 0 = i_1 < i_2 < \cdots < i_{\ell} \le k $$
be all integers~$i$ in $\{0,1,\cdots,k\}$ such that $a_i=1$.
Then by~\eqref{eq:b1} and ~\eqref{e:2} for every $j\in \{ 1,\cdots, \ell\}$ we have
$$ \hS_{i_{j+1}-i_j}(\hvarphi)(\whf^{i_j}(x))
\geq
(i_{j+1}-i_j)\hvarphi(x_0)-2\hC. $$
Summing over $j$ in $\{1,2,\cdots, \ell\}$ we obtain~\eqref{eq:b2}.
Thus, if we put
$$ \Phi(s) \= \sum_{k = 1}^{+ \infty} \exp (k \hvarphi(x_0) - 2 \hC) s^k, $$
then each of the coefficients of
$$ \Upsilon(s) \= \Phi(s) + \Phi(s)^2 + \cdots  $$
is less than or equal to the corresponding coefficient of~$\Xi$, and therefore the radius of convergence of~$\Xi$ is less than or equal to that of~$\Upsilon$.
Since clearly~$\Phi(s) \to + \infty$ as~$s \to \exp(- \hvarphi(x_0))^-$, there is~$s_0$ in~$\left( 0, \exp( - \hvarphi(x_0)) \right)$ such that~$\Phi(s_0) \ge 1$.
It follows that the radius of convergence of~$\Upsilon$, and hence that of~$\Xi$, is less than or equal to~$s_0$ and therefore it is strictly less than~$\exp(-
\hvarphi(x_0))$.
This completes the proof of~\eqref{e:induced gap pressure} and of the lemma.
\end{proof}

\begin{proof}[Proof of the Key Lemma]
When~$\nu$ is supported on a periodic orbit, the desired inequality follows from Lemma~\ref{l:periodic case} in the real case.
In the complex case, note first that the hypotheses that~$\beta > 1$ and that~$f$ satisfies the Polynomial Shrinking of Components condition with exponent~$\beta$, imply that~$f$ has no neutral periodic point in~$J(f)$.
So, in this case the Key Lemma follows from Case~$1$ in the proof of~\cite[Proposition~$4.1$]{InoRiv12}.

Suppose~$\nu$ is not supported on a periodic orbit.
By Proposition~\ref{p:constructing a free IMFS} there is~$D > 0$, a connected and compact subset~$B_0$ of~$\dom(f)$ that intersects~$J(f)$, and a free IMFS~$(\phi_k)_{k = 1}^{+ \infty}$ generated by~$f$ with time sequence~$(m_k)_{k= 1}^{+ \infty}$ that is defined on~$B_0$, and such that for every~$k\ge 1$ and every point~$y$ in~$\phi_k(B_0)$ we have
\begin{equation}
\label{e:optimal sum}
S_{m_k}(\varphi)(y)
\ge
m_k\int_{J(f)} \varphi \ d\nu -D.
\end{equation}
Since the IMFS~$(\phi_k)_{k = 1}^{+ \infty}$ is free, there is a point~$x_0$ of~$B_0$ in~$J(f)$ such that for every~$\ul$ and~$\ul'$ in~$\Sigma^*$ such that~$m_{\ul} = m_{\ul'}$, the sets~$\phi_{\ul}(x_0)$ and~$\phi_{\ul'}(x_0)$ are disjoint.
Note that for every integer~$k \ge 1$, every~$\ul = l_1 \cdots l_k$ in~$\Sigma^*$, every~$y_0$ in~$\phi_{\ul}(x_0)$, and every~$j$ in~$\{1, \ldots, k-1 \}$, the point
$$ y_j \= f^{m_{l_1}+m_{l_2}+\cdots +m_{l_{j}}}(y_0) $$
is in~$\phi_{m_{l_{j+1}}}(B_0)$.
Therefore, by~\eqref{e:optimal sum} we have
\begin{multline*}
S_{m_{\ul}}(\varphi)(y_0)
=
S_{m_{l_1}}(\varphi)(y_0) + S_{m_{l_2}}(\varphi)(y_1) + \cdots + S_{m_{l_k}}(\varphi)(y_{k - 1})
\\ \ge
\sum_{i=1}^k \left( m_{l_i}\int_{J(f)} \varphi \ d\nu - D \right)
=
m_{\ul}\int_{J(f)} \varphi \ d\nu - kD.
\end{multline*}
This shows that for every~$\ul$ in~$\Sigma^*$, and every~$y_0$ in~$\phi_{\ul}(x_0)$ we have
\begin{equation}\label{e:bigsum}
\exp (S_{m_{\ul}}(\varphi)(y_0))\ge \exp
\left(m_{\ul}\int_{J(f)} \varphi \ d\nu\right)
\exp(-|\ul|D).
\end{equation}
On the other hand, if for every integer~$n \ge 1$  we put
$$ \Xi_n
\=
\bigcup_{\ul \in \Sigma^*, m_{\ul} = n} \phi_{\ul}(x_0), $$
then the radius of convergence of the series
$$ \Xi(s)
\=
\sum_{n= 1}^{+ \infty}\left(\sum_{y\in \Xi_n}\exp (S_n(\varphi)(y))\right)s^n, $$
is given by
$$ R \= \left(\limsup_{n\to + \infty} \left(\sum_{y\in \Xi_n}\exp (S_n(\varphi)(y))\right)^{1/n}\right)^{-1}, $$
and satisfies
$$\exp \left(-\limsup_{n\rightarrow + \infty}\frac{1}{n}\log \sum_{y\in f^{-n}(x_0)}\exp (S_n(\varphi)(y))\right)\le R. $$
Therefore, to complete the proof of the Key Lemma it suffices to prove~$R < \exp \left( - \int_{J(f)} \varphi \ d\nu \right)$.
Put
$$ \Phi(s)
\=
\sum_{l=1}^{+ \infty} \exp(-D)\exp \left(m_{l}\int_{J(f)} \varphi \ d\nu\right) s^{m_l}. $$
By inequality~\eqref{e:bigsum} and the fact that $(\phi_k)_{k = 1}^{+ \infty}$ is free, each of the coefficients of the series
\begin{equation*}
\Upsilon(s)
\=
\sum_{i=1}^{+ \infty}\Phi(s)^i
=
\sum_{n = 1}^{+ \infty} \left(\sum_{\ul \in \Sigma^*, m_{\ul} = n}\exp \left(m_{\ul}\int_{J(f)} \varphi \ d\nu\right) \exp(-|\ul| D)\right)s^n,
\end{equation*}
does not exceed the corresponding coefficient of the series~$\Xi$, so the radius of convergence of~$\Xi$ is less than or equal to that of~$\Upsilon$.
Since clearly~$\Phi(s) \to + \infty$ as~$s \to \exp\left(-\int_{J(f)} \varphi \ d\nu\right)^-$, there exists~$s_0$ in~$\left( 0, \exp\left(-\int_{J(f)} \varphi \ d\nu\right) \right)$ such that~$\Phi(s_0)\ge 1$.
This implies that the radius of convergence of~$\Upsilon$, and hence that of~$\Xi$, is less than or equal to~$s_0$, and therefore that~$R \le s_0 < \exp \left(-\int_{J(f)} \varphi \ d\nu \right)$.
This completes the proof of the Key Lemma.
\end{proof}

\bibliographystyle{alpha}

\end{document}